\documentclass[11pt]{article}%
\usepackage{epsfig}
\usepackage{amsmath}
\usepackage{graphicx}
\usepackage{graphicx,color}
\usepackage{amsfonts}
\usepackage{amssymb}
\usepackage{amsmath, amssymb ,amsthm, amsfonts, amsgen}
\usepackage{graphicx}%
\setcounter{MaxMatrixCols}{30}
\providecommand{\U}[1]{\protect\rule{.1in}{.1in}}
\pagestyle{plain}
\setlength{\topmargin}{-0.2in}
\setlength{\textwidth}{15cm}
\setlength{\oddsidemargin}{0cm}
\setlength{\evensidemargin}{0cm}
\setlength{\textheight}{605pt}
\setlength{\topmargin}{12pt}

\newcommand{\e}{\varepsilon}

\newcommand{\R}{\mathbb{R}}

\newcommand{\mathop{\bigca\input{DHLP-10-03-2022.aux}
p}\limits}{\mathop{\bigca\input{DHLP-10-03-2022.aux}
p}\limits}

\newtheorem{theorem}{Theorem}
\newtheorem{proposition}{Proposition}[section]

\newtheorem{remark}{Remark}[section]
\newtheorem{lemma}{Lemma}[section]
\numberwithin{equation}{section}

\usepackage[utf8]{inputenc}

\begin{document}

\title{A Liouville type result and quantization effects on the system $-\Delta u =  u J'(1-|u|^{2})$ for a potential convex near zero}
\date{} 
\author{U. De Maio\thanks{Universit\`a Federico II di Napoli, Dipartimento di
Matematica R. Caccioppoli, Via Cinthia, Monte S. Angelo, Napoli, Italia. E-mail: udemaio.unina.it} $\,$, and
R. Hadiji \thanks{Univ Paris Est Creteil, CNRS, LAMA, F-94010 Creteil, France. \newline \indent \,
 Univ Gustave Eiffel, LAMA, F-77447 Marne-la-Vall\'ee, France. E-mail: rejeb.hadiji@u-pec.fr}  
  $\,$,and
C. Lefter\thanks{Al.I.Cuza University, Faculty of Mathematics and Octav Mayer Institute of Mathematics, Ia\c{s}i, Romania E-mail: catalin.lefter@uaic.ro}$\,$, and  
C. Perugia\thanks{Universit\`a del Sannio, Dipartimento di Scienze e Tecnologie, Via de Sanctis, 82100, Benevento, Italia. E-mail: cperugia@unisannio.it}$\,$ }
\maketitle

\begin{abstract}
	{We consider a Ginzburg-Landau type equation in $\R^2$ of the form 
	$-\Delta u =  u J'(1-|u|^{2})$ with a potential function $J$ satisfying weak conditions allowing for example a zero of infinite order in the origin. We extend in this context the results concerning quantization of finite potential solutions of H.Brezis, F.Merle, T.Rivière from \cite{BMR} who treat the case when $J$ behaves polinomially near 0, as well as a result of Th. Cazenave, found in the same reference, and concerning the form of finite energy solutions.
}


\noindent Keywords: {Ginzburg-Landau type functional, variational problem,
quantization of energy, finite energy solutions. }  \medskip

\noindent2010 AMS subject classifications: 35Q56, 35J50, 35B25, 35J55, 35Q40.

\end{abstract}

\section{Introduction}
We consider in the paper  the following system: 
\begin{equation}
\label{p1}
-\Delta u=u j(1-|u|^{2})\,\,\, \hbox{in} \, \R^{2},
\end{equation}
with $u: \R
^{2}\rightarrow\R^{2}$, where $j\left( t\right) =\dfrac{dJ}{dt}(t)  $   with a $C^{2}$ functional $J:\R\rightarrow[0,\infty)$  satisfying the following conditions:
\begin{itemize}
	\item[($\mathit{H1}$)]	$J(0)=0$ and $J(t)>0$, $\forall t\in(0,\infty)$.
	\item [($\mathit{H2}$)]$\displaystyle \liminf_{t\rightarrow -\infty}{J(t)\over \sqrt{\vert t\vert}}>K$, for some $K>0$.
	\item [($\mathit{H3}$)] $j(t) < 0$ on $(-\infty,0]$ and $j(t) > 0$ on $(0,1]$.
	\item[($\mathit{H4}$)] There {exists  }  $\eta_0>0 $ such that $$J''(t)={\dfrac{d^2J}{dt^2}(t)} \ge 0, \text{ for } t\in[0,\eta_0).$$
\end{itemize}

Solutions to \eqref{p1} are formal critical points of the functional:
\begin{equation}
\label{energyfunct}
E(u) = \int_{\R^2} (|\nabla u(x)|^2+J(1-|u(x)|^2)dx.
\end{equation}

In this paper we generalize two results  from \cite{BMR} for which one needs to use and to properly adapt some of the ideas from their proofs. 

The first result concerns a Liouville-type theorem:

\begin{theorem}\label{teo1} 
	Under assumptions ($\mathit{H1}$)$\div$($\mathit{{H4}}$), let $u:\R
	^{\textcolor{black}{2}}\rightarrow\R^{\textcolor{black}{2}}$ be a smooth solution of
	\eqref{p1} satisfying
	\begin{equation}
	\int_{\R^2}|\nabla u|^{2}dx<+\infty.\label{c1}
	\end{equation}
	Then either $u\equiv0$ or $u$ is a constant $C$ with $|C|=1$ on $\R
	^{{2}}$.
\end{theorem}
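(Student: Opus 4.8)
My first move is to establish the pointwise bound $|u|\le 1$ on $\R^2$. Since $j$ is continuous with $j(0)=0$ and, by ($\mathit{H3}$), $j(t)<0$ for $t<0$, on the set $\{|u|>1\}$ (where $1-|u|^2<0$) one computes
\[
\Delta(|u|^2)=2|\nabla u|^2-2|u|^2 j(1-|u|^2)\ge 0 .
\]
Testing this against the truncation $(|u|^2-1)^+$ (with a cutoff $\zeta_R$ and using $\int_{\R^2}|\nabla u|^2<\infty$ to discard boundary terms) forces $\nabla(|u|^2)\equiv 0$ on $\{|u|>1\}$; continuity of $|u|$ at the free boundary $\{|u|=1\}$ and the exclusion of the constant $|u|\equiv c>1$ (which would give $|\nabla u|^2=c\,j(1-c)<0$) then yield $\{|u|>1\}=\emptyset$. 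With $|u|\le 1$, the right-hand side $u\,j(1-|u|^2)$ is bounded, so interior elliptic estimates give a uniform $C^{2,\alpha}$ bound; combined with $\int_{\R^2}|\nabla u|^2<\infty$, a standard covering argument gives $|\nabla u(x)|\to 0$ as $|x|\to\infty$. From now on $0\le \rho:=1-|u|^2\le 1$ and $J(\rho)\ge 0$.

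\textbf{Two integral identities.} Because $\int_{\R^2}|\nabla u|^2<\infty$, the function $R\mapsto \int_{\partial B_R}|\nabla u|^2\,ds$ is integrable on $(0,\infty)$, so there is a sequence $R_n\to\infty$ with $R_n\int_{\partial B_{R_n}}|\nabla u|^2\,ds\to 0$. Multiplying \eqref{p1} by $u$ and integrating on $B_{R_n}$, the boundary term is controlled by $(2\pi R_n)^{1/2}\big(\int_{\partial B_{R_n}}|\nabla u|^2\big)^{1/2}\to 0$, whence, since $|u|^2 j(1-|u|^2)\ge 0$ for $|u|\le 1$,
\[
\int_{\R^2}|u|^2\,j(1-|u|^2)\,dx=\int_{\R^2}|\nabla u|^2\,dx<\infty .
\]
Multiplying \eqref{p1} instead by $x\cdot\nabla u$ (the Pohozaev test) and using that the scale-invariant term vanishes in dimension two, I obtain on $B_R$
\[
\int_{B_R} J(1-|u|^2)\,dx=\frac{R}{2}\int_{\partial B_R}\Big(|\partial_\tau u|^2-|\partial_\nu u|^2+J(1-|u|^2)\Big)\,ds .
\]

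\textbf{Using convexity near zero.} Here ($\mathit{H4}$) enters. On $[0,\eta_0)$ the convexity of $J$ with $J(0)=0$ gives $J(\rho)=\int_0^\rho j(s)\,ds\le \rho\,j(\rho)$, and for $\rho\le\tfrac12$ one has $\tfrac{\rho}{1-\rho}\le 1$, so with $\delta_0:=\min(\eta_0,\tfrac12)$,
\[
J(1-|u|^2)\le \rho\,j(\rho)=\frac{\rho}{1-\rho}\,(1-\rho)\,j(\rho)\le |u|^2\,j(1-|u|^2)\quad\text{on }\{\rho\le\delta_0\}.
\]
Integrating and invoking the identity above shows $\int_{\{\rho\le\delta_0\}}J(1-|u|^2)\,dx<\infty$. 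Since $0<c_1:=\min_{[\delta_0,1]}J\le J\le \max_{[0,1]}J=:M$ on $\{\rho>\delta_0\}$, we get the key equivalence $\int_{\R^2}J(1-|u|^2)\,dx<\infty\iff \big|\{\,|u|^2\le 1-\delta_0\,\}\big|<\infty$. Moreover, if $\int_{\R^2}J(1-|u|^2)<\infty$ I can choose $R_n$ making the potential \emph{and} kinetic boundary terms in the Pohozaev identity vanish simultaneously (the integrand $\int_{\partial B_R}(|\nabla u|^2+J)$ is integrable in $R$); letting $n\to\infty$ forces $\int_{\R^2}J(1-|u|^2)=0$. As $J(t)=0$ only at $t=0$ (by ($\mathit{H1}$) and $j<0$ on $(-\infty,0)$), this means $|u|\equiv 1$, and then \eqref{p1} reads $-\Delta u=u\,j(0)=0$, so $\Delta|u|^2=2|\nabla u|^2=0$ gives $u\equiv C$ with $|C|=1$.

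\textbf{The dichotomy (main obstacle).} Everything thus reduces to proving that, unless $u\equiv 0$, the set $\{|u|^2\le 1-\delta_0\}$ has finite measure. This is the delicate point. I expect to argue in two stages. First, finite Dirichlet energy forbids an unbounded transition layer: on each dyadic annulus $A_n=\{2^n\le|x|\le 2^{n+1}\}$ one has $\int_{A_n}|\nabla u|^2\to 0$, and a trace/Poincaré estimate on annuli of fixed eccentricity shows that if $|u|$ attained values below $\delta_0'$ and above $1-\delta_0'$ on $A_n$ it would spend a fixed amount of energy there; hence for $n$ large each far annulus is confined to a single regime, and a connectedness argument across annuli shows $|u|$ approaches a \emph{single} rest state (where $u\,j(1-|u|^2)=0$, i.e. $|u|=0$ or $|u|=1$) at infinity. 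If that state is $|u|=1$, then $\{|u|^2\le 1-\delta_0\}$ is bounded and the previous paragraph finishes the proof. The remaining, genuinely hard, case is $|u|\to 0$ at infinity with $u\not\equiv 0$: then $\int_{\R^2}|u|^2<\infty$ (from $\int|u|^2 j<\infty$ and $j(\rho)\ge j(1)/2>0$ for $|u|$ small), and for large $|x|$ equation \eqref{p1} becomes $-\Delta u=V(x)\,u$ with $V=j(1-|u|^2)\to j(1)>0$. I plan to exclude this by the absence of positive embedded eigenvalues for Schrödinger operators (a Kato/Agmon-type theorem): an $L^2$ solution of $-\Delta u=(j(1)+o(1))u$ must vanish, since genuine solutions decay only like $|x|^{-1/2}\notin L^2$. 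This forces $u\equiv 0$, completing the dichotomy. I regard stage two, controlling the decay-to-zero alternative, as the principal difficulty; ($\mathit{H2}$) serves to preclude escape of $|u|$ to large values and is the hypothesis that makes the far-field analysis robust.
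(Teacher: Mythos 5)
You have the right skeleton (the identity $\int_{\R^2}|u|^2 j(1-|u|^2)\,dx=\int_{\R^2}|\nabla u|^2\,dx$, the Pohozaev identity, the convexity bound $J(t)\le t\,j(t)$ from (H4), a far-field dichotomy), and your Pohozaev shortcut in the finite-potential case is correct and even a bit more self-contained than the paper, which invokes Alikakos \cite{A} there. But two of your steps do not close. First, the bound $|u|\le 1$: with only $\int_{\R^2}|\nabla u|^2<\infty$, in dimension two $u$ --- hence $(|u|^2-1)^+$ --- may a priori be unbounded (finite Dirichlet energy allows logarithmic-type growth), so when you test $\Delta(|u|^2)\ge 0$ on $\{|u|>1\}$ against $\zeta_R^2(|u|^2-1)^+$ the cutoff error is of the order $\int \left((|u|^2-1)^+\right)^2|\nabla\zeta_R|^2\,dx$, which is not known to be finite, let alone to vanish along a sequence $R\to\infty$; likewise $\nabla(|u|^2)=2u\cdot\nabla u$ need not lie in $L^2$. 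This is exactly the point the paper flags as not following \cite{BMR}: in Proposition~\ref{mod1} it works on the superlevel sets $\Omega_\mu=\{|u|^2-1>\mu\}$ for Sard-regular $\mu$, writes $u=\rho e^{i\psi}$ there so that $\rho$ solves $-\Delta\rho+c(x)\rho=0$ with $c=|\nabla\psi|^2-j(1-\rho^2)\ge 0$ (sign from (H3)), and in Lemma~\ref{lemma1} tests with a cutoff \emph{alone}, exploiting the signs of $c\rho$ and of $\partial\rho/\partial\nu$ on $\partial\Omega_\mu$ and the Hopf principle --- never multiplying by the possibly unbounded $\rho$.

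Second, the dichotomy. (i) Your ``fixed energy per annulus'' claim is false in two dimensions: points have zero logarithmic capacity, so $|u|$ can cross a fixed band inside an annulus at arbitrarily small Dirichlet cost (a logarithmic spike of radius $\epsilon$ crossing a gap $b-a$ costs about $2\pi(b-a)^2/\log(1/\epsilon)\to 0$). The working mechanism --- which you actually have in hand --- is the uniform gradient bound from your elliptic estimates: if $z_1\le|u(x_k)|\le z_2$ with $|x_k|\to\infty$, then $|u|$ stays in a slightly larger band on a fixed ball $B_{R_0}(x_k)$, where $|u|^2 j(1-|u|^2)\ge c>0$, contradicting the vanishing tail of $\int_{\R^2}|u|^2 j(1-|u|^2)\,dx<\infty$; this is the paper's Step 1, after which connectedness of the exterior of a large disc gives a single far-field regime. (ii) More seriously, in the case $|u|\to 0$ your appeal to absence of embedded eigenvalues is unjustified: Kato--Agmon type theorems need a rate, $V-j(1)=o(1/|x|)$, and Wigner--von Neumann examples show that decay $O(1/|x|)$ already admits embedded eigenvalues; here you only know $V-j(1)=O(|u|^2)=o(1)$ with no rate, and $|u|\sim r^{-1/2}$ oscillation --- the scenario you must exclude --- sits precisely at the borderline. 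The paper disposes of this case elementarily: multiply \eqref{p1} by $(x\cdot\nabla u)\eta_h$; two-dimensional scale invariance makes $\int\Delta u\,(x\cdot\nabla u)\eta_h\,dx\to 0$, and writing $J(1)-J(1-|u|^2)=j(1-\vartheta|u|^2)\,|u|^2$ by the mean value theorem --- the subtraction of $J(1)$ is essential, since $J(1-|u|^2)\notin L^1$ when $|u|\to 0$ --- one obtains from $\int_{\R^2}|u|^2\,dx<\infty$ that $\int_{\R^2} j(1-\vartheta|u|^2)\,|u|^2\,dx=0$, whence $u\equiv 0$ because $j>0$ on $(0,1]$. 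Replacing your spectral-theoretic stage two by this identity (and your capacity argument by the gradient-bound argument) would close the proof.
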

This result appeared in Brezis, Merle and Rivière in \cite{BMR} for the classic Ginzburg-Landau system, namely $J(t)=t^{2}$. 

Concerning the   problem formulated in $\R^n$ we mention \textcolor{black}{some} references \textcolor{black}{in the following lines}. In \cite{A}, Alikakos showed  that if $u:\R^{n}\rightarrow\R^{n}$ is a solution of $(\ref{p1})$ such that $\displaystyle\int_{\R^{n}}\left| \nabla u\right|^{2}+\int_{\R^{n}} J\left( 1-|u|^{2}\right)  \leq C$,  then $u$ is a constant. In \cite{F1}, Farina showed a Liouville-type theorem  for the Ginzburg-Landau system which asserts that every solution $u\in C^{2}\left( \R^{n},\R^{m}\right) $ satisfying $\displaystyle\int_{\R^{n}} \left( 1-|u|^{2}\right)^{2}<+\infty$ is constant (and of unit norm), provided $n\geq4$ ($m\geq1$). Moreover, in \cite{F2}, Farina showed a Liouville-type theorem  for a variant of  the Ginzburg-Landau system under assumption that the solution $u$ satisfies $\displaystyle\int_{\R^{n}}\left| \nabla u\right|^{2}<+\infty$ for $n\geq 2$.

Our second  result generalizes the  theorem of quantization of the energy obtained by Brezis, Merle and Rivière in
\cite{BMR}:
\begin{theorem} \label{teo2} 
	Under assumptions ($\mathit{H1}$)$\div$($\mathit{{H4}}$), let $u:\R%
	^{2}\rightarrow\R^{2}$  be a smooth solution of equation $\left(
	\ref{p1}\right)  $. Then
	\begin{equation}
	\int_{\R^{2}}J\left(  1-\left\vert u\right\vert ^{2}\right)  =\pi
	d^{2}\label{pot}%
	\end{equation}
	for some integer $d=0,1,2,..,\infty$.
\end{theorem}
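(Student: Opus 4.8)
The plan is to combine a Pohozaev identity with an asymptotic analysis of $u$ at infinity, following the strategy of \cite{BMR} but feeding in only the weak hypotheses (\textit{H1})--(\textit{H4}). Since $J\ge 0$, the monotone limit $L:=\int_{\R^2}J(1-|u|^2)\,dx\in[0,+\infty]$ exists, and if $L=+\infty$ there is nothing to prove (this is the value $d=\infty$). So assume $L<\infty$. Writing $v=1-|u|^2$ and testing the equation $-\Delta u=u\,j(v)$ against $x\cdot\nabla u$ on the ball $B_R$, the Dirichlet terms cancel (the special feature of dimension two) and one is left with the Pohozaev identity
\begin{equation}\label{poho}
\int_{B_R}J(1-|u|^2)\,dx=\frac{R}{2}\int_{\partial B_R}\Big(|\partial_\tau u|^2-|\partial_\nu u|^2+J(1-|u|^2)\Big)\,dS,
\end{equation}
where $\partial_\nu,\partial_\tau$ denote the radial and tangential derivatives. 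In polar coordinates \eqref{poho} reads $\int_{B_R}J(v)\,dx=P(R)-Q(R)$ with $P(R)=\tfrac12\int_0^{2\pi}|\partial_\theta u|^2\,d\theta$ and $Q(R)=\tfrac{R^2}{2}\int_0^{2\pi}\big(|\partial_\nu u|^2-J(v)\big)\,d\theta$; since the left side increases to $L$, the whole game is to identify $\lim(P-Q)$ along a good sequence $R_n\to\infty$.

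Next I would establish the asymptotics that make a degree available. Using (\textit{H3}) (the sign of $j$) and the maximum principle applied to the subharmonicity/superharmonicity of $|u|^2$ on $\{|u|>1\}$ and $\{|u|<1\}$, one shows that $|u|$ is bounded; (\textit{H2}), which forces $J$ to grow along $v\to-\infty$, together with $L<\infty$ rules out $|u|$ remaining large, and (\textit{H1})--(\textit{H4}) (in particular the convexity of $J$ near its unique zero at $v=0$) upgrade the finiteness of $\int J(v)$ into genuine decay $v=1-|u|^2\to 0$ as $|x|\to\infty$. Once $|u|$ is bounded below away from $0$ for $|x|$ large, $d:=|\deg(u/|u|,\partial B_R)|\in\mathbb{Z}$ is well defined and independent of $R$; this is the integer in the statement. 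In the same step I would record the finiteness of the radial energy $\int_{\R^2}|\partial_\nu u|^2\,dx<\infty$ (the genuinely radial part is finite even though $\int|\nabla u|^2$ is typically infinite, consistently with \rth{teo1}), which will kill the radial contribution to $Q$.

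With these ingredients the conclusion follows from \eqref{poho}. Because $R\mapsto R\int_0^{2\pi}\big(J(v)+|\partial_\nu u|^2\big)\,d\theta$ is integrable on $(0,\infty)$, we have $\liminf_{R\to\infty}R^2\int_0^{2\pi}\big(J(v)+|\partial_\nu u|^2\big)\,d\theta=0$, so there is a sequence $R_n\to\infty$ along which both nonnegative pieces vanish, whence $Q(R_n)\to0$. Then \eqref{poho} forces $P(R_n)\to L$. Writing $u=\rho e^{i\psi}$ on $\partial B_{R_n}$ and using $\rho\to1$, Cauchy--Schwarz together with $\tfrac1{2\pi}\int_0^{2\pi}\partial_\theta\psi\,d\theta=d$ gives $P(R_n)\ge\pi d^2+o(1)$, hence $L\ge\pi d^2$. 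The reverse inequality $L\le\pi d^2$ is equivalent to the tangential energy $\int_0^{2\pi}|\partial_\theta u|^2\,d\theta$ carrying no excess over the pure winding, that is, to the asymptotic rigidity $u(R,\cdot)\to e^{i(d\theta+\alpha)}$ in $H^1(S^1)$; granting it, $L=\pi d^2$, which is \eqref{pot}.

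The main obstacle is precisely this asymptotic rigidity: proving that $u(R,\cdot)$ converges strongly enough to a pure winding so that $P(R)\to\pi d^2$ \emph{exactly} (not merely $\ge$). In \cite{BMR} the corresponding sharp decay estimates exploit the polynomial form of the nonlinearity; here they must be reconstructed from the weak hypotheses, and the convexity (\textit{H4}) of $J$ near $0$ is the decisive substitute, since it provides the coercivity $J(v)\gtrsim v^2$ near $v=0$ that converts the finite potential $\int J(v)<\infty$ into quantitative decay of both $v$ and $\nabla u$. Controlling the possible set where $|u|$ is not close to $1$, and the behavior permitted by the merely asymptotic condition (\textit{H2}) as $v\to-\infty$, is the delicate point that the weak assumptions force us to treat with care.
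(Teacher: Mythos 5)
Your overall architecture --- a Pohozaev identity combined with asymptotics of $u$ at infinity and a degree $d$ --- matches the paper's, but there are two genuine gaps, and you have correctly located the more serious one yourself without closing it. The step you call ``asymptotic rigidity'' (that the tangential energy on circles carries no excess over pure winding, so that $P(R)\to\pi d^2$ exactly) is not a technical remainder: it is the heart of the theorem, and the mechanism you propose for it fails. Convexity $(\mathit{H4})$ does \emph{not} give the coercivity $J(v)\gtrsim v^2$ near $v=0$: a convex $C^2$ function with $J(0)=J'(0)=0$ may vanish to infinite order at $0$ --- e.g. $J(t)=\exp(-1/t^k)$, which is exactly the class this paper is written to cover --- so the BMR-style decay estimates cannot be reconstructed from a quadratic lower bound. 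The paper substitutes a different device (\rprop{Prop1}): writing $u=\rho e^{i(d\theta+\psi)}$ for $|x|\ge R_0$, it exploits the conservation law $\mathrm{div}(\rho^2\nabla\varphi)=0$, the linear growth bound $\int_{B_R}|\nabla u|^2\le CR$ of \rprop{thm1.1}, a Poincar\'e inequality on circles, and a Gronwall-type integration of the differential inequality $f(R)\le \frac{R}{\beta}f'(R)+F(R)$ with $\beta>1$, to prove the averaged estimates $\int_{B_R\setminus B_{R_0}}|\nabla\psi|^2=o(\log R)$ and $\int_{B_R\setminus B_{R_0}}|\nabla\rho|^2=o(\log R)$. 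It is these logarithmically averaged bounds, not pointwise decay, that kill the excess; nothing in your proposal supplies them.

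Second, your good-circle selection rests on the unproved claim $\int_{\R^2}|\partial_\nu u|^2\,dx<\infty$, which you merely ``record''; hypotheses $(\mathit{H1})$--$(\mathit{H4})$ do not obviously yield it, and the paper never establishes it --- it only obtains $\frac{1}{\log R}\int_{B_R\setminus B_{R_0}}|\partial_\nu u|^2\to0$ as a consequence of \rprop{Prop1} via $|\partial_\nu u|^2\le|\nabla\rho|^2+|\nabla\psi|^2$. The paper in fact avoids choosing good circles altogether: it integrates the Pohozaev identity \eqref{DHP40} in $r$, divides by $\log R$, and uses $\frac{1}{\log R}\int_0^R E(r)\frac{dr}{r}\to E$ together with $\frac{1}{\log R}\int_{B_R}|\partial_\tau u|^2\to 2\pi d^2$ to conclude $E=\pi d^2$. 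Your sequence variant is repairable: once the $o(\log R)$ estimates are in hand, one can find a single sequence $R_n\to\infty$ along which $R_n\int_{S_{R_n}}\big(|\nabla\rho|^2+|\nabla\psi|^2+J(1-\rho^2)\big)\,d\sigma\to0$, giving simultaneously $Q(R_n)\to0$ and $P(R_n)\to\pi d^2$ --- but as written you assume precisely the estimates that constitute the paper's main work. A smaller caveat in the same spirit: the bound $|u|\le1$ is not a routine maximum-principle fact for general $J$ (the paper stresses that the Herv\'e--Herv\'e/Keller--Osserman route does not apply); it is proved in \rprop{mod1} via the Liouville-type \rlemma{lemma1} on superlevel sets, using $(\mathit{H2})$ and the finiteness of the potential.
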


The hypotheses on $J$ include the classic situation $J(t)=t^{2}$ considered in  \cite{BMR}   
and include also a large class of functionals $J$  having a zero of infinite order at $t=0$, such as
\begin{equation}
\label{J.1}J(t)=
\begin{cases}
\exp\left( -1/t^{k}\right)  & \mbox{if } t >  0,\\
0  & \mbox{if } t =  0,\\
{ \sqrt{\vert t\vert} }g(t) & \mbox{if } t <  0,
\end{cases}
\end{equation}
for some $k\geq 1$ and a $C^2$ positive non increasing function $g:(-\infty,0]\rightarrow\R$, $g'(0)=g''(0)=0$. 
A key point in our approach refers to the proof of the bound $|u|\leq1$ which does not follow the lines in \cite{BMR}.

Several results regarding the study of Ginzburg-Landau energy are present in literature. Béthuel-Brezis and Hélein were the first \textcolor{black}{to} study the asymptotic behabiour of the standard energy of Ginzburg-Landau in bounded domain in   \cite{BBH1} and \cite{BBH2}. Generalized vortices in the magnetic Ginzburg-Landau model is considered in  \cite{SaSe}.
In \cite{ES}, Sandier showed that every locally minimizing solution of $-\Delta u=u(1-|u|^{2})$ has a bounded potential. 

The presence of a nonconstant weight function which is motivated by the problem of pinning of vortices, that is, of forcing the location of the vortices to some restricted sites\textcolor{black}{,} are studied in \textcolor{black}{\cite{AS2}}, \cite{BH1}, \cite{BH2}, \cite{BH3}, \cite{HP}, \cite{DuG} and \cite{R}. 

In  \cite{HS1}, the authors investigated the energy of a Ginzburg-Landau type energy with potential having a zero of infinite order. The showed that the main difference with respect to the usual GL-energy is in the  asymptotic of the energy. For $J$ with a zero of infinite order the ``energy cost'' of a degree-one vortex may be much less than
 the cost of $2\pi\log\frac{1}{\e}$ for the GL-functional .
 
In \cite{HP2020}, the authors  considered the asymptotic behaviour of minimizing solutions of a  Ginzburg-Landau type functional with a positive weight and with convex potential near $0$ and they estimated the energy in this case. They also generalized the lower bound for the energy for the Ginzburg-Landau energy of unit vector field given initially by Brezis-Merle-Riviière in  \cite{BMR} for the case where \textcolor{black}{the} potential \textcolor{black}{has} a zero of infinite order.

In \cite{F2}, Farina showed a quantization effect for a variant of the Ginzburg-Landau system  under the assumption that $\displaystyle\int_{\R^{2}} P_h\left(|u|^{2}\right)<+\infty$, where the potential $P_h$ is given by $P_h(t)=\dfrac{1}{2}\prod_{j=1}^{h}\left( 1-t^{2}-k_{j}\right) ^{2}$.\\

Note that  
starting from the Ginzburg-Landau type problem
\begin{equation}
\label{GL}
\begin{array}
{ll}
-\Delta u_{\varepsilon}=\dfrac{1}{\varepsilon^{2}}u_{\varepsilon}
j(1-|u_{\varepsilon}|^{2}) & $in$\,\, \Omega_{\varepsilon},
\end{array}
\end{equation}
after a blow-up argument we are led to $(\ref{p1})$.\\

Problem $(\ref{GL})$, defined in a bounded domain and with potential
satisfying conditions ($\mathit{H1}$), first part of   ($\mathit{H3}
$) and ($\mathit{H4}$) was considered in \cite{HS1}, where the asymptotic behaviour of the minimizers and their energies is described.\\


\section{Preliminary results}

In this section we want to prove some results which will be useful in the sequel. \textcolor{black}{Moreover here and after, for every $R>0$ and $x_{0}\in \R^{n}$, we will denote with
$B_{R}\left(x_{0}\right)$ the set $\left\{ x\in\R^{n};|x-x_{0}|<R \right\}$, with $B_{R}$ the set $\left\{ x\in\R^{n};|x|<R \right\}$ and with $S_{R}$ its
boundary. Moreover, for every function $v$,  we will denote $v^{+}(x)$=$\max\{v(x),0\}$ and  $v^-=-\min\{v,0\}$}.

\begin{lemma}\label{lemma1}
		Let $\Omega\subset\R^n$ be open, not necessarily bounded, connected domain with smooth boundary $\partial\Omega\not=\emptyset$. 
		
		Suppose $c(\textcolor{black}{x})\in L^\infty_{loc}(\Omega)$, $c(x)\ge 0$ in $\Omega$ and $\delta \textcolor{black}{\geq} 0$ are given. Let $\rho\in H^2_{loc}(\Omega)$ be a strong solution to the problem:
		\begin{equation}\label{eqrho}
		\left\{
		\begin{array}{ll}
		-\Delta\rho+c(x)\rho=0, &x\in\Omega,\\
		\rho(x)=\delta\ge0, &x\in\partial\Omega		\end{array}
		\right.
		\end{equation}
and $\rho$ satisfies the lower bound	
$$
\rho(x)\ge\delta,\,\forall x\in\Omega.
$$
Then the following statements hold:
\begin{enumerate}

\item \textcolor{black}{Suppose that $\displaystyle \int_\Omega|\nabla \rho|^2dx<\infty$ and that $n=2$ if $\Omega$ is unbounded}. Then,  $\rho(x)\equiv\delta$ in $\Omega$. If $c\not\equiv0$ then $\delta=0$.

\item Suppose that  $\rho\in L^1(\Omega)$. If $\delta>0$ or if $\delta=0$ and $\mu(\Omega)<\infty$, then $\rho(x)\equiv\delta$ and if $\delta>0$ then $c(x)\equiv 0$.
\end{enumerate}

	\end{lemma}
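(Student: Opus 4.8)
The plan is to treat both statements through the single substitution $w=\rho-\delta$ and to reduce everything to showing $w\equiv 0$. By hypothesis $w\ge 0$ in $\Omega$ and $w=0$ on $\partial\Omega$, while the equation gives $\Delta w=\Delta\rho=c(x)\rho\ge 0$, so that $w$ is subharmonic; equivalently $-\Delta w+c\,w=-c\,\delta\le 0$. Once $\rho\equiv\delta$ is established, substituting back into \eqref{eqrho} yields $c(x)\,\delta=0$ a.e., so that $\delta>0$ forces $c\equiv 0$ and, contrapositively, $c\not\equiv 0$ forces $\delta=0$. Thus in both parts it suffices to prove $w\equiv 0$.

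For part (1) I would use the energy method with $w$ as test function. When $\Omega$ is bounded this is immediate; when $\Omega$ is unbounded (so $n=2$) I would introduce cutoff functions $\zeta_R$ and test the equation against $w\,\zeta_R^2\ge 0$. Since $w=0$ on $\partial\Omega$ and $\zeta_R$ has compact support, the boundary terms disappear after integration by parts and, using $\nabla w=\nabla\rho$, one obtains
\[
\int_\Omega \zeta_R^2|\nabla\rho|^2\,dx+\int_\Omega c\,\rho\,w\,\zeta_R^2\,dx=-2\int_\Omega w\,\zeta_R\,\nabla\rho\cdot\nabla\zeta_R\,dx .
\]
Both terms on the left are nonnegative; estimating the right-hand side by Cauchy--Schwarz and Young's inequality and absorbing gives $\int_\Omega \zeta_R^2|\nabla\rho|^2\,dx\le 4\int_\Omega w^2|\nabla\zeta_R|^2\,dx$. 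Choosing the logarithmic cutoffs available precisely in dimension $n=2$ (for which $\int_\Omega|\nabla\zeta_R|^2\to 0$) and letting $R\to\infty$, the right-hand side vanishes, so $\int_\Omega|\nabla\rho|^2=0$; since $\Omega$ is connected, $\rho$ is constant and equals $\delta$ by the boundary condition.

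For part (2) I would first observe that $\delta>0$, together with $\rho\ge\delta$ and $\rho\in L^1(\Omega)$, already forces $\mu(\Omega)\le\|\rho\|_{L^1}/\delta<\infty$; hence in every case $\mu(\Omega)<\infty$ and $w=\rho-\delta\in L^1(\Omega)$ with $w\ge 0$. Here I would argue by the maximum principle rather than by energy: when $\Omega$ is bounded, subharmonicity of $w$ and $w|_{\partial\Omega}=0$ give $w\le\max_{\partial\Omega}w=0$, so $w\equiv 0$. When $\Omega$ is unbounded of finite measure, I would integrate $\Delta w=c\,\rho\ge 0$ over $\Omega\cap B_R$; the flux through $\partial\Omega$ is $\int_{\partial\Omega}\partial_n w\le 0$ because $w\ge 0$ vanishes on $\partial\Omega$, while the flux through $\Omega\cap\partial B_R$ tends to zero along a suitable sequence $R_k\to\infty$ thanks to $w\in L^1$. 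Passing to the limit forces $\int_\Omega c\,\rho=0$, hence $c\,\rho\equiv 0$ and $w$ harmonic with vanishing Cauchy data on $\partial\Omega$, whence $w\equiv 0$.

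The main obstacle in both parts is the passage to the limit in the unbounded case. In part (1) a linear cutoff is not enough, since $\rho$ may approach a nonzero constant at infinity and then $R^{-2}\int_{B_{2R}\setminus B_R}w^2$ need not vanish; the point is to exploit $n=2$ through logarithmic cutoffs together with the finiteness of the Dirichlet integral, which confines the growth of $w$. In part (2) the delicate step is the vanishing of the far-field flux $\int_{\Omega\cap\partial B_R}\partial_n w$, which is exactly where the hypothesis $\rho\in L^1(\Omega)$ is essential.
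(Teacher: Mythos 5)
Your reduction $w=\rho-\delta$, both bounded cases, and the final deduction $c\,\delta=0$ all match the paper in substance, but both of your unbounded-case limit passages --- precisely the steps you flag as the main obstacles --- contain genuine gaps as written. In part (1) your Caccioppoli identity is correctly derived, but the quantity you must send to zero is $4\int_\Omega w^2|\nabla\zeta_R|^2\,dx$, and the property you invoke, $\int_\Omega|\nabla\zeta_R|^2\,dx\to0$ for logarithmic cutoffs, controls it only if $w\in L^\infty(\Omega)$. Boundedness of $\rho$ is \emph{not} a hypothesis of the lemma, and it cannot be assumed in the paper's application: in Proposition \ref{mod1} the lemma is applied to $\rho=|u|$ on $\Omega_\mu=\{|u|>\sqrt{1+\mu}\}$ exactly at the stage where no bound on $|u|$ is yet known. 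A Dirichlet-finite function in the plane can be unbounded (e.g.\ $w\sim\log\log|x|$ at infinity), so your remark that finiteness of the Dirichlet integral ``confines the growth of $w$'' is the actual mathematical content of the step: one would need, say, that circle averages of Dirichlet-finite functions grow like $o(\sqrt{\log R})$ with constant given by the tail energy --- true, and it does rescue your argument, but it is a lemma you neither state nor prove. The paper sidesteps the issue entirely by testing the equation with the cutoff $\eta_R$ \emph{alone} (a linear cutoff, for which $\|\nabla\eta_R\|_{L^2}$ is merely bounded in dimension two), so that Cauchy--Schwarz against the vanishing tail of $\int|\nabla\rho|^2$ kills the error term with no information on $w$; the price is that one obtains only $c\rho\equiv0$ plus zero boundary flux, and $\rho\equiv\delta$ then follows from Hopf's lemma at the boundary minimum rather than from $\nabla\rho\equiv0$.

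In part (2), the assertion that ``the flux through $\Omega\cap\partial B_R$ tends to zero along a suitable sequence thanks to $w\in L^1$'' is likewise unproved and is not a routine consequence of integrability: coarea and $w\in L^1$ give a sequence $R_k$ with $\int_{\Omega\cap S_{R_k}}w\,d\sigma\to0$, but you have no $L^1$ control of $\nabla w$, hence nothing direct about $\int_{\Omega\cap S_{R_k}}\partial_\nu w\,d\sigma$. The gap is repairable, but only by exploiting structure you never mention: the flux $\Phi(R):=\int_{\Omega\cap S_R}\partial_\nu w\,d\sigma=\int_{\Omega\cap B_R}c\rho\,dx+\int_{\partial\Omega\cap B_R}(-\partial_\nu w)\,d\sigma$ is nonnegative and nondecreasing, and if it had a positive limit then $g(R)=\int_{\Omega\cap S_R}w\,d\sigma$ would satisfy $g'(R)\ge\Phi(R)$ (the endpoint contributions vanish because $w=0$ on $\partial\Omega$) and so grow linearly, contradicting $\int_0^\infty g(R)\,dR=\|w\|_{L^1(\Omega)}<\infty$; hence $\Phi\equiv0$, which gives $c\rho\equiv0$ and $\partial_\nu w\equiv0$ on $\partial\Omega$. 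This monotone-flux mechanism is exactly what the paper's barrier $\varphi_R$ --- solving $-\Delta\varphi_R=R^{-2}$ in $B_R$ with $\varphi_R=0$ on $S_R$ --- is engineered to replace: since $\varphi_R$ vanishes on $S_R$, the unknown spherical flux drops out of Green's formula, and the surviving terms $\frac{C_1}{R}\int_{\Gamma_1^R}(\rho-\delta)\,d\sigma+\frac{\delta}{R^2}\mu(\Omega_R)$ are controlled directly by $\rho\in L^1$ along a good sequence $R_m$ and by $\mu(\Omega)<\infty$, with Fatou then yielding the same two identities. Finally, your endgame ``harmonic with vanishing Cauchy data, whence $w\equiv0$'' also needs a word: the cheap route, used twice by the paper, is Hopf's lemma --- if $w\not\equiv0$, the boundary minimum would force $\partial_\nu w<0$ somewhere on $\partial\Omega$, contradicting the vanishing of the flux there.
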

	
\proof

(1) Suppose first that $\displaystyle \int_\Omega|\nabla \rho|^2dx<\infty$.

If $\Omega$ is bounded multiply  the equation by $\rho$, integrate on $\Omega$ and obtain:
$$
\int_\Omega|\nabla\rho|^2dx-\int_{\partial\Omega}\frac{\partial \rho}{\partial\nu}\rho d\sigma+\int_\Omega c(x)\rho^2dx=0.
$$
As $\rho|_{\partial\Omega}=\delta$ and $\rho\ge\delta$ in $\Omega$ we have that $\displaystyle\frac{\partial\rho }{\partial \nu}\leq 0$ on $\partial\Omega$. Consequently, from the above equality we deduce that $c(x)\rho\equiv0$ in $\Omega$ and $\nabla \rho\equiv 0$. The latter implies $\rho\equiv\delta$ in $\Omega$. Now, as $c(x)\rho\equiv0$, if $c(x)\not\equiv0$ then necessarily $\delta=0$.

 Suppose now that $\Omega$ is unbounded.
 
 Consider $\eta:\R^2\rightarrow \R$ a $C^\infty$ function satisfying:
 $$
 \eta\ge0\text{ in }\R^{\textcolor{black}{2}},\,\eta(x)=1\text{ for }|x|<\frac12 \text{ and }\eta(x)=0\text{ for }|x|>1,
 $$
 and for $R>0$ define $$\eta_R(x)=\eta\left(\frac{x}{R}\right).$$ Observe that for $K=\sup|\nabla\eta|<\infty$ we have
 \begin{equation}\label{g1}
 	\textcolor{black}{\left\{\begin{array}{ll}|\nabla\eta_R(x)|\le\dfrac{K}{R},\,\,\forall x\in\R^2,\vspace{0,3cm}\\
 |\nabla\eta_R(x)|=0, \hbox{ for } |x|\not\in\bigg(\dfrac{R}{2},R\bigg).\end{array}\right.}\end{equation}

Denote by $\Omega_R=\Omega\cap B_R$ and by $A_{\frac{R}{2},R}=B_{R}\setminus B_{\frac{R}{2}}$.

Multiply  equation \eqref{eqrho} by $\eta_R$ and \textcolor{black}{integrating} on $\Omega_R$ to obtain by Green formula:
 \begin{equation}\label{est2}
 \int_{\Omega_R}c(x)\rho(x)\eta_R(x)dx-\int_{\partial\Omega\cap B_R}\frac{\partial\rho}{\partial\nu}\eta_Rd\sigma=-\int_{\Omega_R}\nabla\rho\cdot\nabla\eta_Rdx=-\int_{\Omega\cap A_{\frac{R}{2},R}}\nabla\rho\cdot\nabla\eta_Rdx.
 \end{equation}
 Now, by Cauchy-Schwarz inequality, we have
 $$
 \left|\int_{\Omega\cap A_{\frac{R}{2},R}}\nabla\rho\cdot\nabla\eta_Rdx\right|\le \left(\int_{\Omega\cap A_{\frac{R}{2},R}}|\nabla\rho|^2dx\right)^{\frac12}\frac{K}{R}(\mu(\Omega\cap A_{\frac{R}{2},R}))^{\frac12}.
 $$
 Observe that, as $\displaystyle\int_\Omega|\nabla\rho|^2dx<\infty$ we have
 $$
 \lim_{R\rightarrow\infty} \int_{\Omega\cap A_{\frac{R}{2},R}}|\nabla\rho|^2dx=0
 $$
and, as $\mu(\Omega\cap A_{\frac{R}{2},R})\le \mu( A_{\frac{R}{2},R})=\dfrac{3\pi R^2}{4}$ , passing to the limit for $R\rightarrow\infty$ in \eqref{est2}, we obtain:
$$
\int_{\Omega }c(x)\rho(x) dx-\int_{\partial\Omega  }\frac{\partial\rho}{\partial\nu} d\sigma=0.
$$
As $\rho$ attains its infimum $\delta$ in any point on $\partial\Omega$ we have $\frac{\partial\rho}{\partial\nu}\le0$ on $\partial\Omega$ and so
$$
c(x)\rho(x)\equiv0 \text{ in  }\textcolor{black}{\Omega}.
$$
\textcolor{black}{Consequently $\rho$ is an harmonic function in $\Omega$, hence it attains its minimum $\rho=\delta$ on $\partial\Omega$ and by Hopf maximum principle
$$
\rho\equiv \delta \text{ in }\Omega.
$$
}

(2). Assume now that $\rho\in L^1(\Omega)$. 

Observe that if $\delta>0$ then $\Omega$ has finite Lebesgue measure $\mu(\Omega)<+\infty$ (for $\delta =0$ this is an hypothesis). Indeed, this is a consequence of the fact that $\rho\in L^1(\Omega)$ and $\rho\ge \delta$ in $\Omega$ and thus the constant $\delta$ is in $L^1$, which is $\mu(\Omega)<\infty.$
\medskip

We distinguish two cases: $\Omega$ bounded and $\Omega$ unbounded.\medskip

If $\Omega$ is bounded multiply  the equation by $\rho$, integrate on $\Omega$ and obtain:
$$
\int_\Omega|\nabla\rho|^2dx-\int_{\partial\Omega}\frac{\partial \rho}{\partial\nu}\rho d\sigma+\int_\Omega c(x)\rho^2dx=0.
$$
As $\rho|_{\partial\Omega}=\delta$ and $\rho\ge\delta$ in $\Omega$ we have that $\displaystyle\frac{\partial\rho }{\partial \nu}\leq 0$ on $\partial\Omega$. Consequently, from the above equality we deduce that $c(x)\rho\equiv0$ in $\Omega$, which means $c(\cdot)\equiv0$, and $\nabla \rho\equiv 0$, which implies $\rho\equiv\delta$ in $\Omega$.
\medskip
\medskip

Suppose now that $\Omega$	is an unbounded domain.

For $R>0$ denote by $\Omega_R=\Omega\cap B_R$.
Denote by $\varphi:\overline B_1\rightarrow \R$ the function
$$
\varphi(x)=\frac{1-|x|^2}{2n},
$$
which is in fact the solution to the boundary value problem
$$
\left\{
\begin{array}{ll}
-\Delta\varphi=1&\text{ in } B_1,\\
\varphi=0&\text{ on } \partial B_1.
\end{array}
\right.
$$
Observe that 
$$
|\nabla\varphi(x)|\le C_1:=\frac{1}{n}\text{ for }x\in\textcolor{black}{B_1}
$$
and on the boundary the normal derivative $\displaystyle\frac{\partial\varphi}{\partial\nu}\equiv -C_1$.
\textcolor{black}{Denote by $\varphi_R: B_R\rightarrow\R$ the function defined as}
$$
\varphi_R(x)=\varphi\left(\frac{x}{R}\right),
$$
which is the solution to the boundary value problem 
$$
\left\{
\begin{array}{ll}
\displaystyle -\Delta\varphi=\frac{1}{R^2}&\text{ in } B_R,\\
\varphi=0&\text{ on } S_R,
\end{array}
\right.
$$
and satisfies 
$$
|\nabla\varphi_R(x)|\le \frac{C_1}{R}\text{ in } B_R, \, \frac{\partial\varphi_R}{\partial\nu}\equiv-\frac{C_1}{R}\text{ on }\textcolor{black}{S_R}\textcolor{black}{.}
$$
Denote by $$
\Gamma_1^R=\Omega\cap S_R,\,\Gamma_2^R=\partial\Omega\cap  B_R,
$$
such that 
$$
\partial\Omega_R=\Gamma_1^R\cup\Gamma_2^R.
$$
Observe that, as  $\Omega$ is unbounded and connected,  there exists $R_0>0$ such that for  $R\ge R_0$ both parts of $\partial\Omega_R$ are nonempty\textcolor{black}{,} \textit{i.e.} $\Gamma_1^R\ne\emptyset$ and $\Gamma_2^R\ne\emptyset$.

Multiply now the equation satisfied by $\rho$ with $\varphi_R$, integrate on $B_R$ and use Green formula to obtain:
$$
0=\int_{\Omega_R}(-\Delta\rho(x))\varphi_R(x)dx+\int_{\Omega_R}c(x)\rho(x)\varphi_R(x)dx=
$$
$$
=\int_{\Omega_R}(-\Delta\varphi_R(x))\rho(x)dx+\int_{\partial\Omega_R}\left(\rho\frac{\partial \varphi_R}{\partial\nu}-\varphi_R\frac{\partial \rho}{\partial\nu}\right)d\sigma+ \int_{\Omega_R}c(x)\rho(x)\varphi_R(x)dx
=$$
$$=
\frac{1}{R^2}\int_{\Omega_R}\rho(x)dx-\frac{C_1}{R}\int_{\Gamma_1^R}\rho d\sigma+\delta\int_{\Gamma_2^R}\frac{\partial \varphi_R}{\partial\nu}d\sigma-\int_{\Gamma_2^R}\varphi_R\frac{\partial \rho}{\partial\nu}d\sigma+\int_{\Omega_R}c(x)\rho(x)\varphi_R(x)dx.
$$
Observe that $\displaystyle \frac{\partial \rho}{\partial\nu}\le 0$ on $\Gamma_2^R$ and $c(x)\rho(x)\varphi_R(x)\ge0$ in $\Omega$ and thus:
$$
0\le \frac{1}{R^2}\int_{\Omega_R}\rho(x)dx+ \int_{\Gamma_2^R}\varphi_R\left[-\frac{\partial \rho}{\partial\nu}\right]d\sigma+\int_{\Omega_R}c(x)\rho(x)\varphi_R(x)dx=
$$
\begin{equation}\label{star}
= \delta\int_{\Gamma_2^R}\left[-\frac{\partial \varphi_R}{\partial\nu}\right]d\sigma+ \frac{C_1}{R}\int_{\Gamma_1^R}\rho d\sigma=I_1+I_2.
\end{equation}
Observe now that 
$$
I_1=\delta\int_{\Gamma_2^R}\left[-\frac{\partial \varphi_R}{\partial\nu}\right]d\sigma=
\delta\int_{\partial\Omega_R}\left[-\frac{\partial \varphi_R}{\partial\nu}\right]d\sigma
-\delta\int_{\Gamma_1^R}\left[-\frac{\partial \varphi_R}{\partial\nu}\right]d\sigma=
$$
$$
=\delta\int_{\Omega_R}[-\Delta\varphi_R]dx-\delta \frac{C_1}{R}\int_{\Gamma_1^R}d\sigma=
$$
$$
=\frac{\delta}{R^2}\mu(\Omega_R)-\delta\frac{C_1}{R}\int_{\Gamma_1^R}d\sigma.
$$
Inserting this into \eqref{star} we obtain
$$
0\le \frac{1}{R^2}\int_{\Omega_R}\rho(x)dx+ \int_{\Gamma_2^R}\varphi_R\left[-\frac{\partial \rho}{\partial\nu}\right]d\sigma+\int_{\Omega_R}c(x)\rho(x)\varphi_R(x)dx=I_1+I_2=
$$
\begin{equation}
\label{2star}
=\frac{C_1}{R}\int_{\Gamma_1^R}(\rho-\delta)d\sigma+\frac{\textcolor{black}{\delta}}{R^2}\mu(\Omega_R).
\end{equation}
First, observe that, as $\mu(\Omega)<\infty$,
\begin{equation}\label{11}
\lim_{R\rightarrow+\infty}\frac{1}{R^2}\mu(\Omega_R)=0.
\end{equation}
Then, as $\rho\in L^1(\Omega)$ and $\mu(\Omega)<+\infty$
$$
\int_\Omega(\rho(x)-\delta)dx=\int_{0}^{\infty}\int_{\Gamma_1^R}(\rho-\delta)d\sigma dR<\infty.
$$
Thus, there exists a sequence $R_m\rightarrow+\infty$ such that
\begin{equation}
\label{12}\lim_{R_m\rightarrow+\infty}\int_{\Gamma_1^{R_m}}(\rho-\delta)d\sigma=0.
\end{equation}
Using \eqref{11}, \eqref{12} we obtain, by passing to the limit for $R_m\rightarrow+\infty $ in  \eqref{2star}, that
$$
0=\liminf_{R_m\rightarrow \infty}\left[\frac{1}{R_m^2}\int_{\Omega_{R_m}}\rho(x)dx+ \int_{\Gamma_2^{R_m}}\varphi_{R_m}\left[-\frac{\partial \rho}{\partial\nu}\right]d\sigma+\int_{\Omega_{R_m}}c(x)\rho(x)\varphi_{R_m}(x)dx\right].
$$
By Fatou Lemma we obtain from here that
\begin{equation}
0=\int_{\partial\Omega}\varphi(0)\left[-\frac{\partial \rho}{\partial\nu}\right]d\sigma+
\int_{\Omega}c(x)\rho(x)\varphi (0)dx.
\end{equation}
A first consequence is that $$
c(x)\rho(x)\equiv0\text{ in }\Omega,
$$
and thus $\rho$ is harmonic in $\Omega$ and
 $$
\frac{\partial \rho}{\partial\nu}\equiv0\text{ on }\partial\Omega.
$$
As on $\partial\Omega$ the function $\rho$ attains its minimum $\rho=\delta$, by Hopf maximum principle
$$
\rho\equiv \delta \text{ in }\Omega.
$$

\medskip

\bigskip

\begin{proposition}
	\label{mod1}
	Let $u$ be a classical solution to the problem 
	$$
	-\Delta u =uj(1-|u|^2)\text{ in }\R^2.
	$$ 
	Under the assumption \textit{(H1)}, \textcolor{black}{\textit{(H2)}}, if 
	$$
	\int_{\R^2}|\nabla u|^2dx<\infty
	$$
	or if 
	$$
	\int_{\R^2}J(1-|u|^2) dx<\infty,
	$$
	then 
	$$
	|u(x)|\le 1, \forall x\in \R^2.
	$$
 \end{proposition}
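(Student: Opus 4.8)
The plan is to apply \rlemma{lemma1} to the function $\rho=|u|$ (not $|u|^2$) on the super-level sets of $|u|$. First I would record the diamagnetic inequality $|\nabla|u||\le|\nabla u|$, valid wherever $u\neq0$, and compute, from $\rho^2=|u|^2$ together with $-\Delta u=u\,j(1-|u|^2)$ (so that $u\cdot\Delta u=-|u|^2 j(1-|u|^2)$), the pointwise identity
\[
\Delta\rho=\frac{|\nabla u|^2-|\nabla\rho|^2}{\rho}-\rho\,j(1-\rho^2)\qquad\text{on }\{u\neq0\}.
\]
Wherever $|u|>1$ one has $1-\rho^2<0$, so the standing sign hypothesis (H3) ($j\le0$ on $(-\infty,0]$) makes $-\rho\,j(1-\rho^2)\ge0$; combined with $|\nabla u|^2-|\nabla\rho|^2\ge0$ this yields $\Delta\rho\ge0$ there. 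Setting $c(x):=\Delta\rho/\rho=-j(1-\rho^2)+(|\nabla u|^2-|\nabla\rho|^2)/\rho^2\ge0$, the function $\rho$ solves $-\Delta\rho+c(x)\rho=0$ on such a set.

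Next I would fix $\epsilon>0$ and set $\Omega_\epsilon=\{x:|u(x)|^2>1+\epsilon\}$ with $\delta:=\sqrt{1+\epsilon}$. For a.e.\ $\epsilon$ (regular values, by Sard's theorem) the boundary $\{|u|^2=1+\epsilon\}$ is smooth, and there $\rho=\delta$, while $\rho=|u|>\delta$ strictly inside $\Omega_\epsilon$; since $u$ is smooth and $|u|\ge\delta>0$ on $\overline{\Omega_\epsilon}$, we get $\rho\in H^2_{loc}(\Omega_\epsilon)$ and $c\in L^\infty_{loc}(\Omega_\epsilon)$. Thus $\rho$ satisfies all structural hypotheses of \rlemma{lemma1} on each connected component of $\Omega_\epsilon$, with $n=2$, so it remains only to verify the relevant integrability and invoke one of the two alternatives.

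If $\int_{\R^2}|\nabla u|^2<\infty$, then by the diamagnetic inequality $\int_{\Omega_\epsilon}|\nabla\rho|^2\le\int_{\R^2}|\nabla u|^2<\infty$, and part (1) of \rlemma{lemma1} gives $\rho\equiv\delta$ on every component; this contradicts $|u|>\delta$ on the nonempty open set $\Omega_\epsilon$, so $\Omega_\epsilon=\emptyset$. If instead $\int_{\R^2}J(1-|u|^2)<\infty$, I would first note that (H1) and (H3) give $J(-\epsilon)>0$ (as $J$ is decreasing on $(-\infty,0]$ with $J(0)=0$), whence $J(1-|u|^2)\ge J(-\epsilon)$ on $\Omega_\epsilon$ forces $\mu(\Omega_\epsilon)<\infty$; then I would use (H2) in the form $J(t)\ge K'\sqrt{|t|}$ for $t\ll0$, which, since $\sqrt{|u|^2-1}\ge\tfrac{1}{\sqrt2}|u|$ for $|u|$ large, yields $J(1-|u|^2)\ge K''|u|$ on $\{|u|\ge M\}$, so that $\rho=|u|\in L^1(\Omega_\epsilon)$ (split into $\{|u|\ge M\}$, controlled by $\int J$, and $\Omega_\epsilon\cap\{|u|<M\}$, controlled by $M\mu(\Omega_\epsilon)$). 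Part (2) (with $\delta>0$) then gives $\rho\equiv\delta$, again forcing $\Omega_\epsilon=\emptyset$. In both cases $\Omega_\epsilon=\emptyset$ for a.e.\ $\epsilon>0$; letting $\epsilon\downarrow0$ along such values yields $|u|\le1$.

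The main obstacle is exactly the choice of comparison function, and this is where the argument departs from \cite{BMR}: working with $|u|^2$ would demand the weighted bound $\int|u|^2|\nabla u|^2<\infty$, which is unavailable without the very conclusion sought, whereas $\rho=|u|$ converts $\int|\nabla u|^2$ directly into finite Dirichlet energy via $|\nabla|u||\le|\nabla u|$, and its linear growth matches the $\sqrt{|t|}$ lower bound of (H2) so that finite potential produces $\rho\in L^1$. The remaining care is geometric: securing a smooth boundary for $\Omega_\epsilon$ through Sard's theorem, arguing component by component, and separately dismissing the degenerate case $\Omega_\epsilon=\R^2$ (empty boundary) — there a global cutoff computation forces $\int_{\R^2}\Delta\rho=0$, hence $c\equiv0$, which contradicts $j(1-|u|^2)<0$.
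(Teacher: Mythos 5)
Your proposal is correct and follows essentially the same route as the paper: the paper also works with $\rho=|u|$ on the Sard-regular super-level sets $\Omega_\mu=\{|u|^2>1+\mu\}$, writes $-\Delta\rho+c(x)\rho=0$ with $c=|\nabla\psi|^2-j(1-\rho^2)\ge0$ (your $(|\nabla u|^2-|\nabla\rho|^2)/\rho^2$ is exactly $|\nabla\psi|^2$ from the local polar decomposition), and invokes parts (1) and (2) of Lemma~\ref{lemma1} in the two cases via $|\nabla\rho|\le|\nabla u|$ and the $(H2)$-growth bound $J(1-|u|^2)\ge\varepsilon|u|$ giving $\rho\in L^1(\Omega_\mu)$. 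Your extra care --- splitting the $L^1$ estimate at a level $M$ and explicitly dismissing the degenerate case $\Omega_\epsilon=\R^2$ with empty boundary, which the paper leaves implicit --- is a minor tightening rather than a different argument.
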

\proof

Denote by 
$$
w=1-|u|^2,
$$
and by $w^+=\max\{w,0\}$, $w^-=-\min\{w,0\}$ such that $w=w^+-w^-$. 

Suppose, by \textcolor{black}{contradiction} that $\sup|u|>1$.

For $\textcolor{black}{\mu}>0$ denote by $A_{\textcolor{black}{\mu}}$ the set
  $$
A_{\textcolor{black}{\mu}}=\{x\in\R^2: w^-(x)=\textcolor{black}{\mu} \}\textcolor{black}{.}
$$
By Sard's lemma we know that almost all $\textcolor{black}{\mu}>0$ in the image of $w^-$ are regular values. So, we may choose such a $\textcolor{black}{\mu}$ and consequently $A_{\textcolor{black}{\mu}}$ is a smooth (not necessarily connected) manifold in $\R^2$ and 
$$
A_{\textcolor{black}{\mu}}=\partial\Omega_{\textcolor{black}{\mu}},\quad \Omega_{\textcolor{black}{\mu}}=\{x\in\R^2: w^-(x)>\textcolor{black}{\mu} \}.
$$

In $\Omega_{\textcolor{black}{\mu}}$ we have $w^-=|u|^2-1$ and $|u|>\sqrt{1+\textcolor{black}{\mu}}$.

In $\Omega_{\textcolor{black}{\mu}}$ we may write locally $u=\rho(x)e^{i\psi(x)} $; the phase  $\psi$ is defined locally up to an additive integer multiple of $2\pi$ but $\nabla \psi$ is defined globally. \textcolor{black}{By separating the real and the imaginary parts, we deduce that }the equation satisfied by $\rho$ is
$$
-\Delta \rho+(|\nabla\psi|^2-j(-w^-))\rho=0\text{ in } \Omega_{\textcolor{black}{\mu}},
$$
with $c(x):=(|\nabla\psi|^2-j(-w^-))\ge0$ in $\Omega_{\textcolor{black}{\mu}}$.

Suppose first that 
$$
\int_{\R^2}|\nabla u|^2dx<\infty.
$$
Observe that in the set $\Omega_{\textcolor{black}{\mu}}$ we have
$$
\nabla u=\nabla\rho e^{i\psi}+i\rho\nabla\psi e^{i\psi},
$$
so $|\nabla u|^2=|\nabla \rho|^2+\rho^2|\nabla\psi|^2\ge|\nabla\rho|^2$. Consequently,
$$
\int_{\Omega_{\textcolor{black}{\mu}}}|\nabla \rho|^2 dx<\infty.
$$
Lemma \ref{lemma1} tells that necessarily $\rho\equiv\sqrt{1+\textcolor{black}{\mu}}$ in $\Omega_{\textcolor{black}{\mu}}$ but this contradicts the definition of $\Omega_{\textcolor{black}{\mu}}=\{x\in\R^2:|u(x)|>\sqrt{1+\textcolor{black}{\mu}}\}$.
\medskip

Suppose now that 
$$
\int_{\R^2}J(1-|u|^2) dx<\infty.
$$
This implies that
$$
\int_{\Omega_\mu}J(-w^-(x)) dx<\infty.
$$
By \textcolor{black}{\textit{(H2)}} we have
$$
\liminf_{t\rightarrow-\infty}\frac{J(t)}{\sqrt{-t}}>0.
$$
So, in $\Omega_{\textcolor{black}{\mu}}$ we have $J(-w^-)=J(1 - |u|^2)\ge \varepsilon |u|$ for some $\varepsilon>0$ small enough.
We deduce that $\rho\in L^1(\Omega_{\textcolor{black}{\mu}})$.

Clearly, $\rho>\sqrt{1+\textcolor{black}{\mu}}$ in $\Omega_{\textcolor{black}{\mu}}$ and $\rho=\sqrt{1+\textcolor{black}{\mu}}$ on $\partial \Omega_{\textcolor{black}{\mu}}$.
The Lemma \ref{lemma1} applied to $\rho$ says that $\rho\equiv \sqrt{1+\textcolor{black}{\mu}}$ in $\Omega_{\textcolor{black}{\mu}}$, but this is a contradiction with the definition of $\Omega_{\textcolor{black}{\mu}}$.

Consequently, we \textcolor{black}{get} $|u|\le 1$ in $\R^2$.

\begin{remark}
For the Ginzburg-Landau system the estimate $\vert u \vert\leq 1$  should  holds true without any additional assumption on the energy terms. Indeed this was proved by Herv\'e and Herv\'e for $ u :  \R^2\rightarrow \R^2$, see \cite{HH}  and later generalized by Brezis for 
$ u :  \R^n\rightarrow \R^k$ using an argument, based on the Keller-Osserman theory, see \cite{B}.  
\end{remark}

\section{Proof of Theorem \ref{teo1}}

The proof of Theorem \ref{teo1} will be performed into \textcolor{black}{two} steps.
\newline

\textbf{Step {1.}} By $(\ref{p1})$ we get
\begin{equation*}
\Delta |u|^2=2|\nabla u|^2+2u\Delta u=2|\nabla u|^2 -2|u|^2j(1-|u|^{2}) \quad \text{in }
\R^{n}
\end{equation*}
hence
\begin{equation}\label{E1}
	|u|^{2}j\left( 1-|u|^{2}\right) =|\nabla u|^{2}-\dfrac{1}{2}%
\Delta|u|^{2}.
\end{equation}
Let $\eta\in C^{\infty}(\R^{n},[0,1])$ satisfy $\eta(x)=1$ for
$|x|\leq1$, and $\eta(x)=0$ for $|x|\geq2$. We set $\eta_{h}=\eta\left(
\dfrac{x}{h}\right) $.
 Multiplying \eqref{E1}  by $\eta_h$ and integrating over $\R^2$ (which is effectively an integral over $B_{2h}$), we
have
\begin{equation}
	\begin{aligned}
		&\int_{\R^2}|u|^2j(1-|u|^2)\eta_h dx\\
		&=\int_{\R^2}|\nabla u|^2\eta_h dx
		+\int_{\R^2}u\nabla
		u\nabla\eta_h dx=\int_{\R^2}|\nabla u|^2\eta_h dx
				+\int_{h\le|x|<2h}u\nabla
				u\nabla\eta_h dx.
	\end{aligned} \label{e4.2}
\end{equation}
For the last term, using that  $|u|\le 1$ and Cauchy inequality, we have the estimate:
$$
\left| \int_{h\le|x|<2h}u\nabla
				u\nabla\eta_h dx\right|\le C\int_{h\le|x|<2h}|\nabla u|\frac{1}{h} dx\le C\left(\int_{h\le|x|<2h}|\nabla u|^2 dx\right)^{\frac12}\rightarrow0\text{ for } h\rightarrow+\infty,
				$$
				and the last convergence is motivated by $|\nabla u|\in L^2(\R^2)$.

Consequently, from \eqref{e4.2} passing to the limit for $h\rightarrow
+\infty$, we find
\begin{equation}
\label{E2}\int_{\R^{2}}|u|^{2}j\left( 1-|u|^{2}\right) dx =\int
_{\R^{2}}|\nabla u|^{2} dx<+\infty.
\end{equation}
Let us define the set $B=\left\{  x\in\R^{2}:z_{1}\leq|u(x)|\leq
z_{2}\right\}  $ with $z_{1}$ and $z_{2}$ such that
\begin{equation}
1-{\dfrac{\eta_{0}}{4}}<z_{1}<z_{2}<1-\dfrac{\eta_{0}}{8},\label{k1}%
\end{equation}
where $\eta_{0}$ is given in $(\mathit{H4})$.

We claim that $B$ is bounded. Indeed, arguing by contradiction, let us suppose
that there exists a sequence $(x_{k})_{k}\subset B$ such that $\|x_{k}\|$ goes to $+\infty$ as $k$ goes to $+\infty$.

Let us fix $R_{0}=\dfrac{\dfrac{\eta_{0}}{4}+z_{1}-1}{M}$ where $\|\nabla
u\|_{\infty}=M$. Then, {by mean value theorem}, for every $x\in B_{R_{0}}\left(  x_{k}\right)$, we get
\begin{equation}\label{l1}
|u(x)|\leq M|x_{k}-x|+|u(x_{k})|\leq\dfrac{\eta_{0}}{4}+z_{1}-1+z_{2}
\end{equation}
and
\begin{equation}\label{l2}
|u(x)|\geq|u(x_{k})|-M|x_{k}-x|\geq z_{1}-\dfrac{\eta_{0}}{4}-z_{1}%
+1=1-\dfrac{\eta_{0}}{4}.
\end{equation}
 Moreover, for every
$x\in B_{R_{0}}\left(  x_{k}\right)   $ by Proposition \ref{mod1} and (\ref{l2}) we have
\begin{equation}\label{l3}
1-|u(x)|^{2}=(1-|u(x)|)(1+|u(x)|)\leq2\dfrac{\eta_{0}}{4}=\dfrac{\eta_{0}}{2}.
\end{equation}
By Proposition \ref{mod1}, $(\ref{k1})$ and $(\ref{l1})$ we have
\begin{equation}\label{l4}
1-|u(x)|^{2}\geq(1-|u(x)|)\geq\left(  2-\dfrac{\eta_{0}}{4}-z_{1}-z_{2}\right)  >0.
\end{equation}
Hence, by using $(\ref{l2})$, $(\ref{l3})$, assumption $(\mathit{H4})$ and $(\ref{l4})$, we get
\begin{equation}
\int_{B_{R_{0}}\left(  x_{k}\right)   }|u|^{2}j\left(  1-|u|^{2}\right) dx
\geq|B_{R_{0}}\left(  x_{k}\right) |\left(  1-\dfrac{\eta_{0}}{4}\right)
^{2}j\left[  \left(  2-\dfrac{\eta_{0}}{4}-z_{1}-z_{2}\right)  \right]
.\label{aa}
\end{equation}

By $(\ref{E2})$, for $k$ large enough, there exists $R$ depending only on $M$,
$\eta_{0}$, $z_{1}$ and $z_{2}$ such that
\begin{equation}\label{bb}
\displaystyle\int_{B_{R_{0}}\left(  x_{k}\right)}|u|^{2}j\left(  1-|u|^{2}\right) dx
\leq \int_{|x|>R}|u|^{2}j\left(  1-|u|^{2}\right) dx  
\end{equation}
 Since we know by \eqref{E2} that $\displaystyle\int_{\R^2}|u|^{2}j\left(  1-|u|^{2}\right) dx<\infty$ we conclude that
 $$
 \lim_{\|x_k\|\rightarrow +\infty}\displaystyle\int_{B_{R_{0}}\left(  x_{k}\right)}|u|^{2}j\left(  1-|u|^{2}\right) dx=0
 $$
 and this contradicts 
 $(\ref{aa})$.

\textbf{Step \textcolor{black}{2.}} As the set $B$ is bounded, by $(\ref{E2})$ as in \cite{BMR}
we get either
\begin{equation}
\int_{\R^{\textcolor{black}{2}}}|u|^{2} dx<+\infty\label{B2}%
\end{equation} or
\begin{equation}
\int_{\R^{\textcolor{black}{2}}}j\left(  1-|u|^{2}\right) dx  <+\infty.\label{C2}%
\end{equation}
When $(\ref{C2})$ holds, by  $(\mathit{H1})$, $(\mathit{H4})$, Proposition \ref{mod1} and $(\ref{c1})$ we can apply the
result of \cite{A} and conclude. 

\textcolor{black}{More precisely, as the set B is bounded, it means that for some $R>0$ big enough one has for $|x|>R$ either $|u|>1-\eta_0/8$ or $|u|<1-\eta_0/4$. If we are in this situation we have $J(t)<tj(t)$, by convexity property ($H4$), hence $\displaystyle \int_{\R^2} J(1-|u|^2) dx<\infty$ and then we can apply the result in \cite{A}.}\\
Then, let us suppose that $(\ref{B2})$ holds. \textcolor{black}{Acting as in \cite{BMR}, we would multiply equation in problem \eqref{p1} by $\left(  \sum x_{i}%
\dfrac{\partial u}{\partial x_{i}}\right)  \eta_{h}$, where $\eta_{h}$ is as in \textbf{Step 1.}. It is easy to see that
$$
\int_{\R^{\textcolor{black}{2}}}\left( \Delta u\right)  \left(  \sum x_{i}%
\dfrac{\partial u}{\partial x_{i}}\right)  \eta_{h} dx\longrightarrow 0,%
$$
hence}
\begin{equation}
\int_{\R^{\textcolor{black}{2}}}uj\left(  1-|u|^{2}\right)  \left(  \sum x_{i}%
\dfrac{\partial u}{\partial x_{i}}\right)  \eta_{h} dx\longrightarrow0\label{K2}%
\end{equation}
when $h$ goes to $+\infty$. \newline On the other hand we observe that
\begin{equation}
\begin{array}{l}
\displaystyle\int_{\R^{\textcolor{black}{2}}}uj\left(  1-|u|^{2}\right)  \left(  \sum
x_{i}\dfrac{\partial u}{\partial x_{i}}\right)  \eta_{h} dx=-\dfrac{1}
{2}\displaystyle\int_{\R^{\textcolor{black}{2}}}\sum x_{i}\left(\dfrac{\partial}{x_{i}}J\left(
1-|u|^{2}\right)\right)  \eta_{h} dx\\
\\
=\dfrac{1}{2}\displaystyle\int_{\R^{\textcolor{black}{2}}}\sum x_{i}\dfrac{\partial}
{x_{i}}\left[  J(1)-J\left(  1-|u|^{2}\right)  \right]  \eta_{h} dx
=-\int_{\R^{\textcolor{black}{2}}}\left[  J(1)-J\left(  1-|u|^{2}\right)  \right]
\eta_{h} dx\\
\\
-\dfrac{1}{2}\displaystyle\int_{\R^{\textcolor{black}{2}}}\left[  J(1)-J\left(
1-|u|^{2}\right)  \right]  x\cdot\nabla\eta_{h} dx.
\end{array}
\label{C3}%
\end{equation}
By the mean value theorem  applied to function $J$, there exists $\vartheta
\in(0,1)$ such that
\[
J(1)-J\left(  1-|u|^{2}\right)  =j\left(  1-\vartheta|u|^{2}\right)  |u|^{2}.%
\]
Then $\left(  \ref{C3}\right)  $ becomes
\begin{equation}%
\begin{array}{l}%
\displaystyle\int_{\R^{\textcolor{black}{2}}}uj\left(  1-|u|^{2}\right)  \left(  \sum
x_{i}\dfrac{\partial u}{\partial x_{i}}\right)  \eta_{h} dx+\dfrac{1}%
{2}\displaystyle\int_{\R^{\textcolor{black}{2}}}j\left(  1-\vartheta|u|^{2}\right)
|u|^{2}x\cdot\nabla\eta_{h} dx\\
\\
=-\displaystyle\int_{\R^{\textcolor{black}{2}}}j\left(  1-\vartheta|u|^{2}\right)
|u|^{2}\eta_{h} dx.
\end{array}
\label{C3*}%
\end{equation}
By Proposition \ref{mod1} we have $\left(  1-\vartheta|u|^{2}\right)
\in(0,1)$, then by the regularity of function $j$ and properties of function
$\eta_{h}$, acting as in \cite{BMR}, by (\ref{B2}), we easly get
\begin{equation}
\int_{\R^{\textcolor{black}{2}}}j\left(  1-\vartheta|u|^{2}\right)  |u|^{2}x\cdot
\nabla\eta_{h} dx\longrightarrow0\label{C4}%
\end{equation}
when $h$ goes to $+\infty$. By $\left(  \ref{K2}\right)  $, $\left(
\ref{C3*}\right)  $, $\left(  \ref{C4}\right)  $ we obtain
\[
-\int_{\R^{\textcolor{black}{2}}}j\left(  1-\vartheta|u|^{2}\right)  |u|^{2}\eta
_{h} dx\longrightarrow-\int_{\R^{\textcolor{black}{2}}}j\left(  1-\vartheta|u|^{2}\right)
|u|^{2} dx=0
\]
as $h$ goes to $+\infty,$ which \textcolor{black}{by $(\mathit{H2})$} directly implies $u=0$. Then the theorem is
completely proved.

\section{Proof of Theorem $\ref{teo2} $}

Throughout this section, $u$ will be a smooth solution of problem \eqref{p1} satisfying
\begin{equation}
\label{pot1}\int_{\R^{2}}J\left( 1-\left|  u\right| ^{2} \right) dx
<\infty.
\end{equation}
Let us prove some results which will be useful in the sequel.

\begin{proposition}
[Pohozaev identity]\label{prop2.1} Let $u$ be a smooth solution of problem
\eqref{p1}. Then for every $r>0$, it holds
\begin{equation}
\label{DHP40}\int_{S_{r}}\left|  \dfrac{\partial u}{\partial\nu}\right|
^{2}\textcolor{black}{d\sigma}+\dfrac{2}{r}\int_{B_{r}}J\left(  1-|u|^{2}\right)\textcolor{black}{dx} =\int_{S_{r}}\left|
\dfrac{\partial u}{\partial\tau}\right| ^{2}\textcolor{black}{d\sigma}+\int_{S_{r}}J\left(
1-|u|^{2}\right)\textcolor{black}{d\sigma} .
\end{equation}

\end{proposition}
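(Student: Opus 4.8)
The plan is to establish \eqref{DHP40} by the classical Pohozaev multiplier method: test equation \eqref{p1} against the radial dilation field $x\cdot\nabla u=\sum_i x_i\partial_i u$ and integrate over the ball $B_r$. Since $u$ is smooth and $B_r$ is bounded, every integration by parts below is classical and no convergence issue arises. The backbone of all the computations is the divergence identity
$\int_{B_r}x\cdot\nabla\phi\,dx=-2\int_{B_r}\phi\,dx+r\int_{S_r}\phi\,d\sigma$,
valid in $\R^2$ since $\operatorname{div}x=2$ and $x\cdot\nu=r$ on $S_r$, which I would apply twice (once to the potential, once to $|\nabla u|^2$).

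First I would treat the nonlinear right-hand side. Taking the scalar product of \eqref{p1} with $x\cdot\nabla u$ and using $u\cdot\partial_j u=\tfrac12\partial_j|u|^2$ gives $uj(1-|u|^2)\cdot(x\cdot\nabla u)=\tfrac12 j(1-|u|^2)\,x\cdot\nabla|u|^2$. The key algebraic observation is the potential--gradient identity $j(1-|u|^2)\nabla|u|^2=-\nabla\big[J(1-|u|^2)\big]$, which rewrites the nonlinearity as the total divergence $-\tfrac12 x\cdot\nabla\big[J(1-|u|^2)\big]$. Integrating over $B_r$ and applying the divergence identity with $\phi=J(1-|u|^2)$ turns the right-hand side into $\int_{B_r}J(1-|u|^2)\,dx-\tfrac{r}{2}\int_{S_r}J(1-|u|^2)\,d\sigma$.

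Next I would transform the left-hand side $-\int_{B_r}\Delta u\cdot(x\cdot\nabla u)\,dx$. One integration by parts produces a boundary term $r\int_{S_r}\big|\tfrac{\partial u}{\partial\nu}\big|^2\,d\sigma$, using $x=r\nu$ on $S_r$, and a bulk integrand $|\nabla u|^2+\tfrac12 x\cdot\nabla|\nabla u|^2$, where I use $\sum_i\partial_i u\cdot\partial_{ij}u=\tfrac12\partial_j|\nabla u|^2$. Applying the divergence identity once more, now to $\phi=|\nabla u|^2$, collapses the bulk term; here the dimension enters decisively, because for $n=2$ the two interior integrals of $|\nabla u|^2$ cancel exactly, leaving the left-hand side equal to $\tfrac{r}{2}\int_{S_r}|\nabla u|^2\,d\sigma-r\int_{S_r}\big|\tfrac{\partial u}{\partial\nu}\big|^2\,d\sigma$.

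Finally I would split $|\nabla u|^2=\big|\tfrac{\partial u}{\partial\nu}\big|^2+\big|\tfrac{\partial u}{\partial\tau}\big|^2$ on $S_r$, equate the two sides, and multiply through by $2/r$ to obtain \eqref{DHP40}. The argument is entirely elementary; I expect the only delicate point to be careful bookkeeping of the two applications of the divergence identity and the cancellation of the interior gradient terms, which relies crucially on $n=2$, together with the correct use of the potential--gradient identity that presents the nonlinear term as a divergence. No deeper obstacle is involved.
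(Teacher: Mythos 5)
Your proposal is correct and follows essentially the same route as the paper's proof: both test \eqref{p1} against the Pohozaev multiplier $x\cdot\nabla u$ on $B_r$, rewrite the nonlinearity via $j(1-|u|^2)\nabla|u|^2=-\nabla\bigl[J(1-|u|^2)\bigr]$, apply the divergence identity to both $J(1-|u|^2)$ and $|\nabla u|^2$ with the exact cancellation of the interior $\int_{B_r}|\nabla u|^2\,dx$ terms in dimension two, and conclude by splitting $|\nabla u|^2=\bigl|\tfrac{\partial u}{\partial\nu}\bigr|^2+\bigl|\tfrac{\partial u}{\partial\tau}\bigr|^2$ on $S_r$. All your intermediate identities match the paper's equations \eqref{e2.2}, \eqref{e2.3} and \eqref{e2.90}, so there is nothing to correct.
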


\begin{proof}
Multiplying \eqref{p1} by $(x\cdot \nabla u)$ and integrating over $B_r$, we have
\begin{equation}
\begin{array}{l}
\displaystyle\int_{B_r}(x\cdot\nabla u)\Delta u\,\textcolor{black}{dx}
=
\displaystyle\int_{S_r}\dfrac{\partial u}{\partial\nu}(x\cdot\nabla
u)\textcolor{black}{d\sigma}-
\displaystyle\int_{B_r}\nabla(x\cdot\nabla
u)\nabla u\,\textcolor{black}{dx}\\
\\
=
\displaystyle\int_{S_r}(x\cdot \nu)
\left| \dfrac{\partial u}{\partial\nu}\right|^{2}\textcolor{black}{d\sigma}-\frac{1}{2}\int_{B_r}x\cdot\nabla
(|\nabla u|^2)\textcolor{black}{dx}-
\displaystyle\int_{B_r}|\nabla u|^2\textcolor{black}{dx}\\
\\=
\displaystyle\int_{S_r}(x\cdot
\nu)\left| \dfrac{\partial u}{\partial\nu}\right|^2\textcolor{black}{d\sigma}-\frac{1}{2}\int_{S_r}(x\cdot \nu) |\nabla u|^2\textcolor{black}{d\sigma}
\end{array} \label{e2.2}
\end{equation}
and
\begin{equation}
\begin{array}{l}
\displaystyle\int_{B_r}(x\cdot\nabla u)uj(1-|u|^2)\textcolor{black}{dx}
=-\dfrac{1}{2}
\displaystyle\int_{B_r}(x\cdot\nabla J(1-|u|^2))\textcolor{black}{dx}\\
\\=-\dfrac{1}{2}\left(-
\displaystyle\int_{B_r}2J(1-|u|^2)\textcolor{black}{dx}+
\displaystyle\int_{S_r}(x\cdot \nu) J(1-|u|^2)\textcolor{black}{d\sigma}\right)\\
\\=
\displaystyle\int_{B_r}J(1-|u|^2)\textcolor{black}{dx}-\frac{r}{2}\int_{S_r}J(1-|u|^2)\textcolor{black}{d\sigma}.
\end{array} \label{e2.3}
\end{equation}
Moreover, since $|\nabla u|^2=\left| \dfrac{\partial u}{\partial\nu}\right|^{2}+\left| \dfrac{\partial u}{\partial\tau}\right|^{2}$, by $\left(\ref{e2.2} \right)$ and $\left(\ref{e2.3} \right)$ we have
\begin{equation}\label{e2.90}
-\dfrac{r}{2}\int_{S_r}\left| \dfrac{\partial u}{\partial\nu}\right|^{2}\textcolor{black}{d\sigma}+\frac{r}{2}\int_{S_r} \left| \dfrac{\partial u}{\partial\tau}\right|^{2}\textcolor{black}{d\sigma}=\int_{B_r}J(1-|u|^2)\textcolor{black}{dx}-\frac{r}{2}\int_{S_r}J(1-|u|^2)\textcolor{black}{d\sigma}.
\end{equation}
Finally, by \eqref{e2.2}, \eqref{e2.3} and \eqref{e2.90}, we obtain \eqref{DHP40}.
\end{proof}
Now we prove the following result.

\begin{proposition}
\label{thm1.1} Assume $(\mathit{H1})$, $(\mathit{H2})$ and $\left( \ref{pot1}\right) $. Let $u$ be a smooth solution of
problem \eqref{p1}. Then%

\begin{equation}
\label{DHP2}\|\nabla u\|_{L^{\infty}(\R^{2})}< +\infty,
\end{equation}

\begin{equation}
|u(x)|\to1,\quad as \quad|x|\to\infty.\label{DHP3}%
\end{equation}
and
\begin{equation}
\label{DHP4}\int_{B_{R}}\left| \nabla u\right| ^{2} dx\leq CR,
\end{equation}
\noindent where $C$ is a positive constant independent of $R$.
\end{proposition}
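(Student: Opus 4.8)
The plan is to establish the three assertions in a logical chain, starting from the global gradient bound, which is the foundation for the others. First I would prove the Lipschitz bound \eqref{DHP2}. Since we already know $|u|\le 1$ everywhere (Proposition \ref{mod1}, applicable because either the Dirichlet integral or the potential is finite), the right-hand side $uj(1-|u|^2)$ of \eqref{p1} is bounded: indeed $|u|\le 1$ and $1-|u|^2\in[0,1]$, so $|uj(1-|u|^2)|\le \sup_{[0,1]}|j|<\infty$ by the $C^2$ regularity of $J$. Thus $u$ solves $-\Delta u = f$ with $f\in L^\infty(\R^2)$ and $\|f\|_\infty$ controlled. I would then apply interior elliptic estimates on balls of fixed radius: for any $x_0$, on $B_2(x_0)$ we have $\|u\|_{L^\infty(B_2(x_0))}\le 1$ and $\|f\|_{L^\infty(B_2(x_0))}\le C$, so by $W^{2,p}$ estimates followed by Sobolev embedding (or directly by interior Schauder estimates, since $f$ is in fact continuous), $\|\nabla u\|_{L^\infty(B_1(x_0))}\le C(\|u\|_{L^\infty(B_2(x_0))}+\|f\|_{L^\infty(B_2(x_0))})\le C$, with $C$ independent of $x_0$. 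Taking the supremum over $x_0$ gives \eqref{DHP2}.

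Next I would derive \eqref{DHP3}, that $|u(x)|\to 1$ as $|x|\to\infty$. The idea is that the finiteness of $\int_{\R^2}J(1-|u|^2)\,dx$ forces $J(1-|u|^2)$ to be small away from the origin, and since $J(t)>0$ for $t\ne 0$ (by $(\mathit{H1})$ together with $(\mathit{H2})$ controlling the behaviour for $t<0$), smallness of $J(1-|u|^2)$ forces $1-|u|^2$ near $0$. Concretely, suppose for contradiction there is a sequence $x_k$ with $|x_k|\to\infty$ and $\big|\,1-|u(x_k)|^2\,\big|\ge\alpha>0$. By the uniform Lipschitz bound from \eqref{DHP2}, the function $1-|u|^2$ is also uniformly Lipschitz (as $|u|\le 1$), so there is a fixed radius $r_0>0$ on which $\big|\,1-|u(x)|^2\,\big|\ge\alpha/2$ throughout $B_{r_0}(x_k)$. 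On that ball $J(1-|u|^2)\ge c(\alpha)>0$ for a constant depending only on $\alpha$ (using positivity of $J$ away from $0$ and $(\mathit{H2})$ for the negative side), giving $\int_{B_{r_0}(x_k)}J(1-|u|^2)\,dx\ge c(\alpha)\pi r_0^2>0$. Passing to a subsequence so the balls $B_{r_0}(x_k)$ are disjoint contradicts $\int_{\R^2}J(1-|u|^2)\,dx<\infty$.

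Finally I would prove the linear energy growth \eqref{DHP4} using the Pohozaev identity of Proposition \ref{prop2.1}. Writing $|\nabla u|^2=|\partial u/\partial\nu|^2+|\partial u/\partial\tau|^2$ and combining \eqref{DHP40} appropriately, one expresses the surface integral of $|\nabla u|^2$ over $S_r$ in terms of potential terms. Concretely, adding the two tangential/normal pieces and using \eqref{DHP40} one obtains, for every $r>0$, an identity of the form $\int_{S_r}|\nabla u|^2\,d\sigma = \tfrac{4}{r}\int_{B_r}J(1-|u|^2)\,dx - 2\int_{S_r}J(1-|u|^2)\,d\sigma + 2\int_{S_r}|\partial u/\partial\tau|^2\,d\sigma$, so the dominant control comes from bounding the right-hand side and then integrating in $r$. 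A cleaner route: integrate the identity \eqref{DHP40} in $r$ from $0$ to $R$, or use that $J\ge 0$ together with the global bound $\int_{\R^2}J(1-|u|^2)\,dx<\infty$ to control $\tfrac{2}{r}\int_{B_r}J$ uniformly. Since $\int_{B_r}J(1-|u|^2)\,dx\le \int_{\R^2}J\,dx=:M<\infty$, the term $\tfrac{2}{r}\int_{B_r}J$ is bounded, and the Pohozaev identity then yields $\int_{S_r}(|\partial u/\partial\nu|^2-|\partial u/\partial\tau|^2)\,d\sigma\le \int_{S_r}J\,d\sigma$; integrating over $r\in(0,R)$ and using $\int_{\R^2}J<\infty$ bounds one combination, while the total $\int_{B_R}|\nabla u|^2 = \int_0^R\int_{S_r}|\nabla u|^2\,d\sigma\,dr$ grows at most linearly once the averaged surface energy is shown to be bounded, giving $\int_{B_R}|\nabla u|^2\,dx\le CR$.

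The main obstacle I anticipate is the last step: extracting the clean linear bound \eqref{DHP4} from the Pohozaev identity requires careful bookkeeping of the normal versus tangential contributions, since Pohozaev only controls the \emph{difference} $|\partial u/\partial\nu|^2-|\partial u/\partial\tau|^2$, not each term separately. Closing this gap presumably uses the fact that $\tfrac{2}{r}\int_{B_r}J\,dx\to 0$ as $r\to\infty$ (since the total potential is finite), together with monotonicity obtained by integrating in $r$, to show the averaged surface energy $\tfrac1R\int_{B_R}|\nabla u|^2$ stays bounded; this is the delicate quantitative point and the place where the hypotheses $(\mathit{H1})$–$(\mathit{H2})$ and \eqref{pot1} must be used most carefully.
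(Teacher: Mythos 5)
Your treatment of \eqref{DHP2} and \eqref{DHP3} is correct and essentially identical to the paper's: interior $W^{2,p}$ estimates on balls $B_2(y)$ with the right-hand side bounded via $|u|\le 1$ (Proposition \ref{mod1}) and the regularity of $j$, followed by the Sobolev embedding $W^{2,p}\subset C^1$ for $p>2$; and then the Lipschitz-ball contradiction argument for $|u|\to 1$. (The paper needs only the one-sided case $|u(x_n)|\le 1-\delta$, since $|u|\le 1$ already holds, and gets the lower bound $J(1-|u|^2)\ge J(\delta/2)$ on a ball of radius $\delta/2M$ from the monotonicity of $J$ on $(0,1]$; your two-sided variant using $(\mathit{H2})$ is harmless but unnecessary.)

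The third step, however, has a genuine gap: the Pohozaev identity cannot yield \eqref{DHP4}. Identity \eqref{DHP40} controls only the \emph{difference} of the normal and tangential surface energies; rearranged it reads
\begin{equation*}
\int_{S_r}|\nabla u|^2\,d\sigma \;=\; 2\int_{S_r}\left|\frac{\partial u}{\partial\tau}\right|^2 d\sigma+\int_{S_r}J\left(1-|u|^2\right)d\sigma-\frac{2}{r}\int_{B_r}J\left(1-|u|^2\right)dx
\end{equation*}
(your displayed version has incorrect coefficients, but the structural issue is the same), and the term $2\int_{S_r}|\partial u/\partial\tau|^2\,d\sigma$ is precisely what is uncontrolled at this stage --- it is the quantity that ultimately carries the $2\pi d^2\log R$ growth, and no a priori bound on it exists before \eqref{DHP4} is established. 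Integrating \eqref{DHP40} in $r$, as you suggest, merely reproduces the balance \eqref{DHP42} between normal energy, tangential energy and potential, which is again only a difference; no monotonicity argument closes this circle, and you correctly flag the obstacle but your proposed fix does not resolve it. The paper's actual route is different and elementary: multiply \eqref{p1} by $u$ and integrate over $B_R$ to get $\int_{B_R}|\nabla u|^2\,dx=\int_{S_R}\frac{\partial u}{\partial\nu}\cdot u\,d\sigma+\int_{B_R}|u|^2 j\left(1-|u|^2\right)dx$; the boundary flux is at most $2\pi M R$ by \eqref{DHP2} and $|u|\le 1$, while the bulk term is bounded by Cauchy--Schwarz, $\int_{B_R}|u|^2 j\left(1-|u|^2\right)dx\le\sqrt{\pi}\,R\left(\int_{B_R}J'^2\left(1-|u|^2\right)dx\right)^{1/2}$, combined with the key inequality $J'^2(t)\le C J(t)$ on $[0,1]$ --- obtained by integrating $J''J'\le MJ'$ using $J(0)=J'(0)=0$ --- so that \eqref{pot1} makes the last factor bounded and the whole term $O(R)$. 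This multiply-by-$u$ identity together with $J'^2\le CJ$ is the missing idea; in the paper, Pohozaev is reserved for the later quantization argument, where both gradient contributions turn out to be only logarithmic.
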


\begin{proof}
First of all we prove $\left(\ref{DHP2} \right)$.\\
To this aim, for any $y\in\R^{2}$, let us denote by $B_{2}\left(y\right)$ the ball of radius 2 of $\R^{2}$ and consider the following \textcolor{black}{equation}
\begin{equation}\label{p1*}
\begin{array}{cc}
-\Delta u=u j(1-|u|^{2}) & $in$\,  B_2\left(y\right). \\
\end{array}
\end{equation}
By interior $W^{2,p}$ estimates, with $p > 2$ for the solution of the equation $\left(\ref{p1*} \right)$, by Proposition \ref{mod1} and by regularity of $j$, there exits a constant $C$ independent of $y$ such that
\begin{align*}
\|u\|_{W^{2,p}
(B_{1}\left(y\right))}\leq& C \left( \|u\|_{L^{\infty}
(B_{2}\left(y\right))}+\|u j(1-|u|^{2})\|_{L^{p}
(B_{2}\left(y\right))}\right)\le C.
\end{align*}
Finally, by using the Sobolev embedding for $p > 2$, we have $W^{2,p}(B_{1}\left(y\right))\subset C^{1}(\overline{B}_{1}\left(y\right))$, hence  $\left(\ref{DHP2} \right)$.\\
Let us prove $\left(\ref{DHP3} \right)$. To this aim we suppose that it were not true, hence there exists a sequence $|x_{n}|\rightarrow \infty$ such that $\left|u\left( x_{n}\right) \right|\leq 1-\delta$ for some $\delta >0$. Let us consider the ball $B_{\frac{\delta}{2M}}\left(x_{n}\right)$ where $M=\|\nabla u\|_{L^{\infty}}$ as $u\in C^{1}\left( \R^{2}\right)$. Then, by mean value theorem, we get
\begin{equation*}
\left|u(x)-u\left( x_{n}\right) \right|\leq M| x- x_{n}|\leq M\frac{\delta}{2M}=\frac{\delta}{2}.
\end{equation*}
Then
\begin{equation}\label{DHP100}
\left|u(x)\right|=\left|u(x)-u\left( x_{n}\right) \right|+\left|u\left( x_{n}\right) \right|\leq \frac{\delta}{2}+1-\delta= 1-\frac{\delta}{2}.
\end{equation}
By Proposition \ref{mod1},  $\left|u(x)\right|^{2}\leq \left|u(x)\right|$ and by $\left(\ref{DHP100} \right)$, we get $1-\left|u(x)\right|^{2}\geq \dfrac{\delta}{2}$. By $(\mathit{H2})$, function $J$ is increasing on $(0,1]$ hence $J\left( 1-\left|u(x)\right|^{2}\right)\geq J\left(\dfrac{\delta}{2} \right)$ and
\begin{equation}\label{DHP501}
\int_{B_{\frac{\delta}{2M}}\left(x_{n}\right)}J\left( 1-\left|u(x)\right|^{2}\right) dx\geq J\left(\frac{\delta}{2}\right)\pi\frac{\delta^{2}}{4M^{2}}.
\end{equation}
By $\left(\ref{pot1} \right)$  there exists $R_{0}$ such that
\begin{equation}\label{DHP501*}
\int_{\left|x\right|>R_{0}}J\left( 1-\left|u(x)\right|^{2}\right) dx< J\left(\frac{\delta}{2}\right)\pi\frac{\delta^{2}}{4M^{2}}.
\end{equation}
As $\|x_{n}\|\rightarrow \infty$  there exists a ball $B_{\frac{\delta}{2M}}\left(x_{n}\right)\subset \R^{2}\setminus B_{R_{0}}$, then $\left(\ref{DHP501} \right)$ and $\left(\ref{DHP501*} \right)$ are clearly in contradiction.\\
Finally, in order to prove $\left(\ref{DHP4} \right)$, let us multiply $\left(\ref{p1} \right)$ by $u$ and integrate over $B_{R}$. We obtain
\begin{equation*}
\int_{B_{R}}\left|\nabla u\right|^{2} dx=\int_{S_{R}}\dfrac{\partial u}{\partial \nu}u \, \textcolor{black}{d\sigma} + \int_{B_{R}}\left| u\right|^{2}j\left(1-\left| u\right|^{2} \right) dx,
\end{equation*}
\noindent where $\nu$ denotes the outward normal to $B_{R}$.
It is easy to show that
\begin{equation}\label{der}
\left|\int_{S_{R}}\dfrac{\partial u}{\partial \nu}u\, \textcolor{black}{d\sigma}\right|\leq 2M\pi R,
\end{equation}
then it remains to prove
\begin{equation}\label{DHP5}
\int_{B_{R}}\left| u\right|^{2}j\left(1-\left| u\right|^{2} \right) dx \leq CR
\end{equation}
for some constant $C$ independent of $R$. For this purpose let us observe that by Proposition \ref{mod1} and Cauchy-Schwarz inequality we have
\begin{equation}\label{CS}
\int_{B_{R}}\left| u\right|^{2}j\left(1-\left| u\right|^{2} \right) dx\leq \sqrt{\pi}R\left(\int_{B_{R}} J'^{2}\left(1-\left| u\right|^{2} \right)dx\right)^{\frac{1}{2}}.
\end{equation}
Now we observe that
\begin{equation}\label{DHP6}
J'^{2}(t)\leq C J(t)\,\,\,\forall t\in \left[0,1 \right].
\end{equation}
Indeed as $J''$ is a continuous function, there exists a positive constant $M$ such that
\begin{equation*}
J''(t)\leq M \,\forall t\in \left[0,1 \right].
\end{equation*}
Multiplying both members of previous inequality by $J'(t)$ which is nonnegative by $(\mathit{H2})$, we obtain
\begin{equation*}
J''(t)J'(t)\leq M J'(t) \,\forall t\in \left[0,1 \right]
\end{equation*}
which is
\begin{equation*}
\frac{1}{2}\frac{d}{dt}\left( J'(t)\right) ^{2}\leq M J'(t) \,\,\,\forall t\in \left[0,1 \right].
\end{equation*}
Taking into account that $J(0)=J'(0)=0$  and integrating both members we get
\begin{equation*}
\left( J'(t)\right) ^{2}\leq 2M J(t)\,\forall t\in \left[0,1 \right],
\end{equation*}
hence $\left(\ref{DHP6} \right)$ holds. By (\ref{der}), (\ref{CS}), (\ref{DHP6}) and Proposition \ref{mod1} we obtain $\left(\ref{DHP4} \right)$.
\end{proof}
By previous results, $\deg(u, S_{R})$ is well defined for $R$ large (see \cite{BMR}) and we denote
$d=|\deg(u,S_{R})|$. By virtue of \eqref{DHP3}, there exists
$R_{0}>0$, such that
\begin{equation}
\label{DHP8}|u(x)|\geq \dfrac{3}{4}, \quad\text{for } |x|=R\geq R_{0}
\end{equation}
and a smooth single-valued function $\psi(x)$, defined for $|x|\geq
R_{0}$, such that%

\begin{equation}
\label{DHP9}u(x)=\rho(x)e^{i(d\theta+\psi(x))},
\end{equation}
where $\rho=|u|$. If we denote
\begin{equation}\label{fi}
\varphi(x)=d\theta(x)+\psi(x),
\end{equation}
then $\varphi(x)$ is
well defined and smooth locally on the set $|x|\geq R_{0}$.

Now we are able to prove the following result:

\begin{proposition}
\label{Prop1} Assume $(\mathit{H2})$ and $(\mathit{H3})$. Let $u$ be a smooth solution of problem \eqref{p1} written as in $\left(
\ref{DHP9} \right) $. Then
\begin{equation}
\label{DHP7*}\lim_{R\rightarrow+\infty}\frac{1}{\log R}\int_{B_{R}\setminus
B_{R_{0}}}\left|  \nabla\psi\right| ^{2} \textcolor{black}{ dx}=0
\end{equation}
and
\begin{equation}
\label{DHP7}\lim_{R\rightarrow+\infty}\frac{1}{\log R}\int_{B_{R}\setminus
B_{R_{0}}}\left|  \nabla\rho\right| ^{2}\textcolor{black}{ dx}=0
\end{equation}

\end{proposition}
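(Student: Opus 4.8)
The plan is to work in the region $\{|x|\ge R_0\}$, where by the previous proposition $|u|\ge\frac34$ and the representation $u=\rho e^{i\varphi}$ with $\varphi=d\theta+\psi$, $\rho=|u|$ and $\psi$ single-valued is available. First I would record the two equations obtained by separating real and imaginary parts of \eqref{p1}: the modulus equation
\[
-\Delta\rho+\rho\,|\nabla\varphi|^2=\rho\,j(1-\rho^2),
\]
and the phase conservation law $\operatorname{div}(\rho^2\nabla\varphi)=0$, together with the pointwise splitting $|\nabla u|^2=|\nabla\rho|^2+\rho^2|\nabla\varphi|^2$ and $\nabla\varphi=d\nabla\theta+\nabla\psi$, where $|\nabla\theta|^2=1/|x|^2$.

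On the annulus $\Omega_R=B_R\setminus B_{R_0}$ I would then establish the basic energy identity. Because $\psi$ is single-valued, $\int_0^{2\pi}\partial_\theta\psi\,d\theta=0$, so after integrating in the angular variable the cross term $\int_{\Omega_R}\nabla\theta\cdot\nabla\psi\,dx$ vanishes, while $\int_{\Omega_R}|\nabla\theta|^2dx=2\pi\log(R/R_0)$; hence
\[
\int_{\Omega_R}|\nabla u|^2dx=\int_{\Omega_R}|\nabla\rho|^2dx+\int_{\Omega_R}|\nabla\psi|^2dx+2\pi d^2\log\frac{R}{R_0}-\int_{\Omega_R}(1-\rho^2)|\nabla\varphi|^2dx.
\]
The elementary tool here is an averaging (Ces\`aro) lemma: since $|u|\to1$ forces $\varepsilon(r):=\sup_{|x|=r}(1-|u|^2)\to0$, every bounded $f$ with $f(r)\to0$ obeys $\frac{1}{\log R}\int_{R_0}^R f(r)\,\frac{dr}{r}\to0$; in particular $\frac{1}{\log R}\int_{\Omega_R}\frac{1-\rho^2}{|x|^2}dx\to0$, and with Cauchy--Schwarz on the mixed terms the $(1-\rho^2)$-weighted integral is absorbed into an $o(\log R)$ error plus an arbitrarily small fraction of $\int_{\Omega_R}|\nabla\psi|^2$. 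Thus the two claims \eqref{DHP7*}--\eqref{DHP7} would follow at once from the sharp upper bound $\int_{B_R}|\nabla u|^2=2\pi d^2\log R+o(\log R)$, since the two remaining integrals are nonnegative.

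To produce that upper bound I would combine the Pohozaev identity of Proposition~\ref{prop2.1} with the hypothesis $\int_{\R^2}J(1-|u|^2)<\infty$. Writing $\mathcal P(r)=\int_{B_r}J$, which increases to $\mathcal P_\infty<\infty$, and $P(r)=\int_{S_r}J$ (so $\int_{R_0}^\infty P\,dr\le\mathcal P_\infty$), \eqref{DHP40} reads $\int_{S_r}|\partial_\nu u|^2-\int_{S_r}|\partial_\tau u|^2=P(r)-\frac{2}{r}\mathcal P(r)$. Integrating against $dr/r$ and using that $\frac2r\mathcal P(r)\to0$ and that $P$ is integrable, the averaging lemma controls the difference between the radial and tangential energies up to $o(\log R)$; feeding this back into the identity above is meant, after adapting the corresponding estimates of \cite{BMR}, to yield the logarithmic bound.

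The step I expect to be the main obstacle is precisely this passage from the merely linear a priori bound $\int_{B_R}|\nabla u|^2\le CR$ of \eqref{DHP4} to the logarithmic one. The linear bound is consistent with the energy growing like $R$ and so cannot by itself give the conclusion; what is really required is that the radial energy decay, equivalently that the conserved radial flux $c_0=\int_{S_r}\rho^2\partial_r\psi\,d\sigma$ of the current $\rho^2\nabla\varphi$ vanish. Indeed, if $c_0\ne0$ then Cauchy--Schwarz gives $\int_{S_r}\rho^2(\partial_r\psi)^2\ge c_0^2/(2\pi r)$ and hence $\int_{\Omega_R}|\nabla\psi|^2\ge\frac{c_0^2}{2\pi}\log(R/R_0)$, contradicting \eqref{DHP7*}. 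Showing $c_0=0$ and the attendant decay of the radial energy is exactly where the finiteness of the potential and the Pohozaev identity must be used together; once this logarithmic bound is secured, the energy identity of the second step forces $\int_{\Omega_R}|\nabla\rho|^2+\int_{\Omega_R}|\nabla\psi|^2=o(\log R)$, and the nonnegativity of each term gives \eqref{DHP7} and \eqref{DHP7*}.
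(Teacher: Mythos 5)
Your setup is sound and in fact coincides with the paper's skeleton: the polar decomposition, the conservation law $\operatorname{div}(\rho^2\nabla\varphi)=0$, the vanishing of the angular cross term (the paper's \eqref{DHP18}), and your Ces\`aro averaging lemma (the paper's l'Hospital step \eqref{Hospital}) are all correct. But the two pivotal facts on which your argument rests are announced rather than proved, and the routes you sketch for them fail. First, the vanishing of the radial flux: your argument that $c_0=\int_{S_r}\rho^2\partial_r\psi\,d\sigma$ must vanish ``because otherwise $\int_{A_R}|\nabla\psi|^2\gtrsim\log R$, contradicting \eqref{DHP7*}'' is circular --- \eqref{DHP7*} is precisely what is being proved. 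The paper establishes the flux identity \eqref{DHP12*} unconditionally: the field $D=(u\wedge u_{x_1},u\wedge u_{x_2})$ is smooth on \emph{all} of $\R^2$ and divergence-free by the equation ($\operatorname{div}D=u\wedge\Delta u=0$), so $\int_{S_R}D\cdot\nu\,d\sigma=0$ for every $R$; outside $B_{R_0}$ one has $D=\rho^2\nabla\varphi$ and $V\cdot\nu=0$, giving $\int_{S_R}\rho^2\partial_\nu\psi\,d\sigma=0$. The point is to integrate over the full ball, across the possible zero set of $u$ where the polar representation is unavailable --- a step your proposal has no substitute for.

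Second, your plan to extract the sharp bound $\int_{B_R}|\nabla u|^2\le 2\pi d^2\log R+o(\log R)$ from the Pohozaev identity cannot work. Integrating \eqref{DHP40} in $r$ yields exactly one scalar relation (the paper's \eqref{DHP42}), and by Ces\`aro averaging it says only that the radial energy minus the tangential energy equals $-2E\log R+o(\log R)$: one equation in two unknown growth rates. It cannot bound the total energy unless one already knows \eqref{DHP7*} and \eqref{DHP7}; indeed the paper uses \eqref{DHP42} only \emph{after} Proposition \ref{Prop1}, to conclude the quantization. The mechanism you are missing is the paper's differential-inequality bootstrap: test $\operatorname{div}(\rho^2\nabla\varphi)=0$ against $\psi-\psi_R$ on $A_R$, use the zero-flux identity \eqref{DHP12*}, the Poincar\'e inequality on circles \eqref{DHP20} and Young's inequality to obtain $(1-2\epsilon)f(R)\le\frac{R}{2}f'(R)+F(R)$ with $f(R)=\int_{A_R}|\nabla\psi|^2dx$ and $F(R)=o(\log R)$. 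By itself this inequality admits solutions growing like $R^{\beta}$ with $\beta=2(1-2\epsilon)>1$; it is the a priori linear bound $f(R)\le CR$ of \eqref{DHP4} --- which you dismissed as ``unable by itself to give the conclusion'' --- that closes the loop: an integrating-factor comparison of $g=f/\log R$ between radii $\theta$ and $R$, combined with $f(R)\le CR$ and $\tilde\beta>1$, forces $g(\theta)\le\mu$ for every $\mu>0$, i.e.\ $f=o(\log R)$. Finally, note that \eqref{DHP7} then still requires its own argument in the paper (testing the modulus equation \eqref{DHP9*} against $(1-\rho)\eta_R$ and discarding the $j$-term by its sign); your energy-identity shortcut would indeed deliver both limits simultaneously, but only granted the sharp upper bound you did not establish.
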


\begin{proof}
By putting $\left(\ref{DHP9} \right)$ in $\left(\ref{p1} \right)$ we get
\begin{equation*}
-\Delta u=-\left( \Delta \rho\right)e^{i\varphi}-2i\left(\nabla \rho\nabla\varphi \right)e^{i\varphi}-\rho e^{i\varphi}\left( i\Delta\varphi-\left| \nabla \varphi\right|^{2}\right)=\rho e^{i\varphi}j\left( 1-\rho^{2}\right).
\end{equation*}
Separating the real and imaginary parts we obtain
\begin{equation}\label{DHP8*}
\rho\Delta\varphi+2\nabla\rho\nabla\varphi=0 \,\,\,\, \hbox{for}\, \left| x\right|\geq R_{0}
\end{equation}
\begin{equation}\label{DHP9*}
-\Delta\rho+\rho\left| \nabla \varphi\right|^{2}=\rho j\left( 1-\rho^{2}\right) \,\,\,\, \hbox{for}\, \left| x\right|\geq R_{0}.
\end{equation}
Let us observe that equation $\left(\ref{DHP8*} \right)$ can be written as
\begin{equation}\label{DHP11}
\hbox{div}\left(\rho^{2}\nabla\varphi \right) =0 \,\,\,\, \hbox{for}\,  \left| x\right|\geq R_{0}.
\end{equation}
Let $V(x)$ be the vector field in $\R^{2}\setminus\lbrace 0 \rbrace$ defined by
\begin{equation*}
V\left( r \cos\theta,r \sin \theta\right)=\left( -\sin \theta, \cos \theta\right).
\end{equation*}
By (\ref{fi}) we have
\begin{equation}\label{DHP10}
\nabla\varphi= d\nabla\theta+\nabla\psi=\frac{d}{r}V+\nabla\psi.
\end{equation}
Then, combining $\left(\ref{DHP11} \right)$ and $\left(\ref{DHP10} \right)$ we have
\begin{equation}\label{DHP12}
\hbox{div} \left( \rho^{2}\left(\frac{d}{r}V+\nabla\varphi \right) \right)=0 \,\,\,\, \hbox{for}\, \left| x\right|\geq R_{0}.
\end{equation}
Now we want to prove that
\begin{equation}\label{DHP12*}
\int_{S_{R}}\rho^{2}\frac{\partial\psi}{\partial\nu}\textcolor{black}{ d\sigma}=0.
\end{equation}
To this aim let us consider the vector field $D=(u\wedge u_{x_{1}},u\wedge u_{x_{2}})$ which is well defined and smooth on $\R^{2}$. Note that by equation
$\left(\ref{p1} \right)$
\begin{equation}\label{DHP13}
\hbox{div} D=u\wedge \Delta u=0
\end{equation}
and by integrating $\left(\ref{DHP13} \right)$ over $B_{R}$ we have
\begin{equation}\label{DHP14}
\int_{S_{R}}D\cdot\nu\textcolor{black}{ d\sigma} =0 \,\,\, \forall R\geq R_{0}.
\end{equation}
On the other hand, we have
\begin{equation}
D=\rho^{2}\nabla\varphi \,\,\,\,\hbox{for} \, \left| x\right|\geq R_{0},
\end{equation}
so $\left(\ref{DHP10} \right)$  and the fact that $V\cdot \nu =0$ on $S_{R}$ give $\left(\ref{DHP12*} \right)$.
\\
Now, we want to prove $\left(\ref{DHP7*} \right)$.
To this aim let us pose
\begin{equation*}
\psi_{R}=\frac{1}{2\pi R}\int_{S_{R}}\psi \textcolor{black}{ d\sigma}.
\end{equation*}
Multiplying $\left(\ref{DHP11} \right)$ by $\psi-\psi_{R}$ and integrating over $A_{R}=B_{R}\setminus B_{R_{0}}$, we obtain
\begin{equation}\label{DHP16}
\begin{array}{l}
\displaystyle\int_{A_{R}}\rho^{2}\left( \dfrac{d}{r}V+\nabla\psi\right)\nabla\psi\textcolor{black}{ dx}=\int_{S_{R}}\rho^{2}\left( \dfrac{d}{r}V\cdot\nu +\dfrac{\partial\psi}{\partial\nu}\right)\left( \psi-\psi_{R}\right)\textcolor{black}{ d\sigma}\\
\\
\displaystyle -\int_{S_{R_{0}}}\rho^{2}\left( \dfrac{d}{r}V\cdot\nu+\dfrac{\partial\psi}{\partial\nu}\right)\left( \psi-\psi_{R}\right)\textcolor{black}{ d\sigma}.\\
\end{array}
\end{equation}
As $V\cdot\nu=0$, by $\left(\ref{DHP12*} \right)$ we get
\begin{equation}\label{DHP17}
\int_{S_{R_{0}}}\rho^{2}\left( \dfrac{d}{r}V\cdot\nu+\dfrac{\partial\psi}{\partial\nu}\right)\left( \psi-\psi_{R}\right)\textcolor{black}{ d\sigma}=\int_{S_{R_{0}}}\rho^{2} \frac{\partial\psi}{\partial\nu}\psi\textcolor{black}{ d\sigma}=C
\end{equation}
where $C$ is a constant independent of $R$. We also observe that
\begin{equation}\label{DHP18}
\int_{A_{R}}\dfrac{d}{r}V\cdot\nabla\psi \textcolor{black}{ dx}=\int_{A_{R}}\frac{d}{r}\dfrac{\partial\psi}{\partial\tau}\textcolor{black}{ dx}=0.
\end{equation}
By $\left(\ref{DHP16} \right)$, $\left(\ref{DHP17} \right)$ and $\left(\ref{DHP18} \right)$ we have
\begin{equation}\label{DHP19}
\int_{A_{R}}\rho^{2}\left| \nabla\psi\right|^{2}\textcolor{black}{ dx}\leq\int_{S_{R}}\left| \dfrac{\partial\psi}{\partial\nu}\right|\left| \psi-\psi_{R}\right|\textcolor{black}{ d\sigma}+\int_{A_{R}}\left(1-\rho^{2} \right)\dfrac{d}{r}\left| \nabla\psi\right|\textcolor{black}{ d\sigma}+C.
\end{equation}
Cauchy-Schwarz inequality implies
\begin{equation*}
\left|\int_{S_{R}} \dfrac{\partial\psi}{\partial\nu} \left( \psi-\psi_{R}\right)\textcolor{black}{ d\sigma} \right|\leq\left(\int_{S_{R}}\left| \dfrac{\partial\psi}{\partial\nu}\right|^{2}\textcolor{black}{ d\sigma} \right)^{\frac{1}{2}}\left(\int_{S_{R}}\left| \psi-\psi_{R}\right|^{2} \textcolor{black}{ d\sigma}\right)^{\frac{1}{2}}.
\end{equation*}
We recall the following Poincar\'e inequality  
\begin{equation}\label{DHP20}
\int_{S_{R}}\left| \psi-\psi_{R}\right|^{2}\textcolor{black}{ d\sigma}\leq R^{2}\int_{S_{R}}\left| \nabla_{\tau}\psi\right|^{2}\textcolor{black}{ d\sigma}.
\end{equation}
Therefore by $\left(\ref{DHP20} \right)$ and Young inequality we have
\begin{equation}\label{DHP21}
\left|\int_{S_{R}} \dfrac{\partial\psi}{\partial\nu} \left( \psi-\psi_{R}\right)\textcolor{black}{ d\sigma} \right|\leq \dfrac{R}{2}\int_{S_{R}}\left| \dfrac{\partial\psi}{\partial\nu}\right|^{2}\textcolor{black}{ d\sigma}+\dfrac{R}{2}\int_{S_{R}}\left| \nabla_{\tau}\psi\right|^{2}\textcolor{black}{ d\sigma}= \dfrac{R}{2}\int_{S_{R}}\left| \nabla\psi\right|^{2}\textcolor{black}{ d\sigma}.
\end{equation}
By $\left(\ref{DHP21} \right)$ and $\left(\ref{DHP19} \right)$ we obtain
\begin{equation}\label{DHP22}
\int_{A_{R}}\rho^{2}\left| \nabla\psi\right|^{2} \textcolor{black}{ dx}\leq\dfrac{R}{2}\int_{S_{R}}\left| \nabla\psi\right|^{2}\textcolor{black}{ d\sigma}+\int_{A_{R}}\left(1-\rho^{2} \right)\dfrac{d}{r}\left| \nabla\psi\right|\textcolor{black}{ dx}+C.
\end{equation}

Remember that $A_R=B_R\setminus B_{R_0}$. Given $0<\epsilon<\frac14$ we choose $R_0$ big enough such that 
$$
1-\epsilon\le \rho^2(x)\le 1,\, \mbox{ for }|x|\ge R_0.
$$
Denote by 
\begin{equation}
\label{def_f}
f(R)=\int_{A_{R}}\left| \nabla\psi\right|^{2}\textcolor{black}{ dx}.
\end{equation}
whose derivative is 
$$
f'(R)=\int_{S_{R}}\left| \nabla\psi\right|^{2}\textcolor{black}{ d\sigma}
$$
and by \eqref{DHP4} and \eqref{DHP10} satisfies the estimate
\begin{equation}
\label{est4}
f(R)\le CR.
\end{equation}
Thus \eqref{DHP22} becomes:
\begin{equation}\label{22+}
(1-\epsilon)f(R)\le\frac{R}{2}f'(R)+\int_{A_{R}}\left(1-\rho^{2} \right)\dfrac{d}{r}\left| \nabla\psi\right|\textcolor{black}{ dx}+C.
\end{equation}
We estimate the second term in \eqref{22+}. By   Young inequality we obtain
\begin{equation}\label{DHP23+}
\int_{A_{R}}\left(1-\rho^{2} \right)\dfrac{d}{r}\left| \nabla\psi\right|\textcolor{black}{ dx}\leq 
\frac{d^2}{\epsilon} \int_{A_{R}}\dfrac{\left( 1-\rho^{2}\right)^2}{r^{2}}\textcolor{black}{ dx}+ \epsilon \int_{A_{R}}\left| \nabla\psi\right|^{2}\textcolor{black}{ dx}=\frac{d^2}{\epsilon} \int_{A_{R}}\dfrac{\left( 1-\rho^{2}\right)^2}{r^{2}}\textcolor{black}{ dx}+\epsilon f(R) .
\end{equation}
and thus \eqref{22+} implies
\begin{equation}
\label{22++}
(1-2\epsilon)f(R)\le\frac{R}{2}f'(R)+\frac{d^2}{\epsilon} \int_{A_{R}}\dfrac{\left( 1-\rho^{2}\right)^2}{r^{2}}\textcolor{black}{ dx}+C.
\end{equation}
Observe, by l'Hospital rule that 

\begin{equation}\label{Hospital}
\lim_{R\rightarrow\infty}\frac{1}{\log R}\int_{A_{R}}\dfrac{\left( 1-\rho^{2}\right)^2}{r^{2}}\textcolor{black}{ dx}=\lim_{R\rightarrow\infty}
\frac{1}{R}\int_{S_{R}}( 1-\rho^{2})^2\textcolor{black}{ d\sigma}=0,
\end{equation}

because $\lim_{|x|\rightarrow\infty}\rho(x)=1$. Consequently we may rewrite \eqref{22++} in the form
\begin{equation}
\label{22+++}
f(R)\le\frac{R}{\beta}f'(R)+F(R),
\end{equation}
where $\beta=2(1-2\epsilon)>1$ \textcolor{black}{ and $F(R)=\displaystyle \frac{d^2}{ \epsilon(1-2\epsilon)}\int_{A_{R}}\frac{\left( 1-\rho^{2}\right)^2}{r^{2}}\textcolor{black}{ dx}+\dfrac{C}{1-2\epsilon}$ which, by \eqref{Hospital} satisfies}
\begin{equation}\label{F}
\lim_{R\rightarrow\infty}\frac{F(R)}{\log R}=0.
\end{equation}
\textcolor{black}{Denoted} by 
$$
g(R)=\frac{f(R)}{\log R},
$$
we want to prove 
$$
\lim_{R\rightarrow\infty}g(R)=0.
$$
We plug the expression for $g$ in \eqref{22+++} and obtain:
\begin{equation}\label{g+}
g(R)\le g'(R)\frac{R\log R}{\beta\log R-1}+\frac{\beta F(R)}{\beta\log R-1}=\frac{g'(R)}{\alpha(R)}+H(R),
\end{equation}
where we denoted by $\displaystyle \alpha(R)=\frac{\beta\log R-1}{R\log R}$ and by $\displaystyle H(R)=\frac{\beta F(R)}{\beta\log R-1}$.
Denote by $\displaystyle\tilde\beta=\frac{\beta+1}{2}>1$ and observe that 
\begin{equation}
\label{alpha}\frac{\tilde\beta}{R}\le\alpha(R)\le\frac{\beta}{R},
\end{equation}
for $R>\tilde R$ big enough. 

Observe that, by \textcolor{black}{\eqref{F}},
$
\lim_{R\rightarrow\infty}H(R)=0.
$
\textcolor{black}{Hence,} given $\mu>0$ there exists $R(\mu)>\tilde R$ such that $H(R)<\mu$ for all $R>R(\mu)$. Consequently, by \eqref{g+} we have 
for $R>R(\mu)$
\begin{equation}
\label{g+-}
\alpha(R)(g(R)-\mu)\le g'(R)=(g(R)-\mu)'.
\end{equation}
Given $R>\theta>R(\mu)$ we multiply \eqref{g+-} by $\displaystyle\exp\left(-\int_\theta^R\alpha(\tau)d\tau\right)$ and find that  the function

$$
R\rightarrow(g(R)-\mu)\exp\left(-\int_\theta^R\alpha(\tau)d\tau\right)
$$
is increasing and thus, using also the linear growth of $f$ \textit{i.e} \eqref{est4}, we find
\begin{equation}\label{in1}
(g(\theta)-\mu)\le (g(R)-\mu)\exp\left(-\int_\theta^R\alpha(\tau)d\tau\right)\le \left(\frac{CR}{\log R}-\mu\right)\exp\left(-\int_\theta^R\alpha(\tau)d\tau\right).
\end{equation}
By \eqref{alpha} we have 
$$
\exp\left(-\int_\theta^R\alpha(\tau)d\tau\right)\leq \left(\frac{\theta}{R}\right)^{\tilde\beta},
$$
which plugged into \eqref{in1} gives 
 \begin{equation}\label{in2}
 (g(\theta)-\mu)\le  \left(\frac{CR}{\log R}+\mu\right)\left(\frac{\theta}{R}\right)^{\tilde\beta}\rightarrow 0 \;\text{ for } R\rightarrow +\infty.
 \end{equation}
 We found that for  $\theta > R(\mu)$ one has $0\le g(\theta)\le \mu$. As $\mu>0$ may  be chosen arbitrarily small, 
 we obtain that 
 $$
 \lim_{R\rightarrow\infty}g(R)=0,
 $$
 that is \eqref{DHP7*}.

Now we want to prove $\left(\ref{DHP7} \right)$. To this aim let us consider the function $\eta$ as in the proof of Theorem \ref {teo2} and set $\eta_R(x)=\eta\left(\dfrac{x}{R}\right)$. Multiplying \eqref{DHP9*} by
$(1-\varrho) \eta_{R}$ and integrating over $\R^{2}\setminus B_{R_{0}}$, we obtain
\begin{equation}\label{DHP34}
\begin{array}{l}
\displaystyle\int_{\R^{2}\setminus B_{R_{0}}}\left| \nabla\rho\right|^{2}\eta_{R} \textcolor{black}{dx}=\displaystyle\int_{\R^{2}\setminus B_{R_{0}}}\left(1-\rho \right)\nabla\rho\nabla\eta_{R}\textcolor{black}{dx}+\int_{\R^{2}\setminus B_{R_{0}}}\rho\left(1-\rho \right)\left| \nabla\varphi\right|^{2}\eta_{R}\textcolor{black}{dx}\\
\\
-\displaystyle\int_{S_{R_{0}}}\dfrac{\partial\rho}{\partial\nu}\left(1-\rho \right)\eta_{R}\textcolor{black}{d\sigma}-\displaystyle\int_{\R^{2}\setminus B_{R_{0}}}\rho\left(1-\rho \right)j\left(1-\rho^{2} \right)\eta_{R}\textcolor{black}{dx}.\\
\end{array}
\end{equation}
\textcolor{black}{Now we estimate each term in \eqref{DHP34}}. By definition of  $\eta_R$ and as $\left( 1-\rho^{2}\right)\leq 1$, we have
\begin{equation}\label{DHP35}
\begin{array}{l}
\displaystyle\int_{\R^{2}\setminus B_{R_{0}}}\left(1-\rho \right)\nabla\rho\nabla\eta_{R}\textcolor{black}{dx}
=-\frac{1}{2}\displaystyle\int_{\R^{2}\setminus B_{R_{0}}}\nabla\left( 1-\rho\right)^{2}\nabla\eta_{R}\textcolor{black}{dx}=\dfrac{1}{2}\displaystyle\int_{\R^{2}\setminus B_{R_{0}}}\left( 1-\rho\right)^{2}\Delta\eta_{R}\textcolor{black}{dx}\\
\\
=\dfrac{1}{2}\displaystyle\int_{B_{2R}\setminus B_{R_{0}}}\left( 1-\rho\right)^{2}\Delta\eta_{R}\textcolor{black}{dx}\leq\frac{C}{R^{2}}\displaystyle\int_{B_{2R}\setminus B_{R_{0}}}\left( 1-\rho\right)^{2}\textcolor{black}{dx}\leq C.\\
\end{array}
\end{equation}
Let us observe that by $\left(\ref{DHP10} \right)$ we can write
\begin{equation}\label{DHP36}
\left| \nabla\varphi\right|^{2}\leq 2\left( \dfrac{d^{2}}{r^{2}}+\left| \nabla\psi\right|^{2}\right).
\end{equation}
Then by definition of $\eta_R$,  $\left(\ref{DHP8}\right)$ and $\left(\ref{DHP36} \right)$, we obtain
\begin{equation}\label{DHP37}
\begin{array}{l}
\displaystyle\int_{\R^{2}\setminus B_{R_{0}}}\rho\left(1-\rho \right)\left| \nabla\varphi\right|^{2}\eta_{R}\textcolor{black}{dx}\leq 2\displaystyle\int_{B_{2R}\setminus B_{R_{0}}}\rho\left(1-\rho \right)\left( \dfrac{d^{2}}{r^{2}}+\left| \nabla\psi\right|^{2}\right)\eta_{R}\textcolor{black}{dx}\\
\\
\leq 2d^{2}\displaystyle\int_{B_{2R}\setminus B_{R_{0}}}\dfrac{\rho\left(1-\rho \right)}{r^{2}}\textcolor{black}{dx}+2\int_{B_{2R}\setminus B_{R_{0}}}\rho\left(1-\rho \right)\left| \nabla\psi\right|^{2}\textcolor{black}{dx}\\
\\
\leq 2d^{2}\displaystyle\int_{B_{2R}\setminus B_{R_{0}}}\dfrac{\rho\left(1-\rho^{2} \right)} {\left(1+\rho \right)r^{2}}\textcolor{black}{dx}+2\int_{B_{2R}\setminus B_{R_{0}}}\left| \nabla\psi\right|^{2}\textcolor{black}{dx}\\
\\
\leq C_{1}\displaystyle\int_{B_{2R}\setminus B_{R_{0}}}\dfrac{\left(1-\rho^{2} \right)}{r^{2}}\textcolor{black}{dx}+C_{2}\displaystyle\int_{B_{2R}\setminus B_{R_{0}}}\left| \nabla\psi\right|^{2}\textcolor{black}{dx},\\
\end{array}
\end{equation}
$C_{1}$ and $C_{2}$ being constants independent of $R$.

\textcolor{black}{By }\eqref{Hospital} we have that 
\begin{equation}\label{IlogR}
\lim_{R\rightarrow\infty}\frac{1}{\log R}\int_{B_{2R}\setminus B_{R_{0}}}\dfrac{\left(1-\rho^{2} \right)}{r^{2}}\textcolor{black}{dx}=0,
\end{equation}
which combined with \eqref{DHP7*} gives that
\begin{equation}
\label{ologR}
\lim_{R\rightarrow\infty}\frac{1}{\log R}\int_{\R^{2}\setminus B_{R_{0}}}\rho\left(1-\rho \right)\left| \nabla\varphi\right|^{2}\eta_{R}\textcolor{black}{dx}=0.
\end{equation}
Finally, by $(\mathit{H2})$, we can assert that
\begin{equation}\label{DHP38}
-\int_{\R^{2}\setminus B_{R_{0}}}\rho\left(1-\rho \right)j\left(1-\rho^{2} \right)\eta_{R}\textcolor{black}{dx}\leq 0.
\end{equation}
By putting $\left(\ref{DHP35} \right)$, $\left(\ref{ologR} \right)$ and $\left(\ref{DHP38} \right)$ in $\left(\ref{DHP34} \right)$ and as $\eta_{R}(x)=1$ in $B_{R}$, we find
\begin{equation}\label{DHP7bis}
\lim_{R\rightarrow\infty}\frac{1}{\log R}\int_{B_{R}\setminus B_{R_{0}}}\left| \nabla\rho\right|^{2}\textcolor{black}{dx}\leq\lim_{R\rightarrow\infty}\frac{1}{\log R}\int_{\R^{2}\setminus B_{R_{0}}}\left| \nabla\rho\right|^{2}\eta_{R}\textcolor{black}{dx}=0,
\end{equation}
which is \eqref{DHP7}. 
\end{proof}

\subsection{Quantization of the energy}

The final step is to prove
\begin{equation}
\label{DHP39}\int_{\R^{2}}J\left(  1-\rho^{2}\right) \textcolor{black}{dx}=\pi d^{2}.
\end{equation}
Let us consider
\begin{equation*}
\begin{array}{l}
E=\displaystyle\int_{\R^{2}}J\left( 1-\rho^{2}\right)\textcolor{black}{dx},\\
\\E(r)=\displaystyle\int_{B_{r}}J\left( 1-\rho^{2}\right)\textcolor{black}{dx}.
\end{array}
\end{equation*}
We have that
\begin{equation}
\label{DHP41}
\begin{array}{l}
\displaystyle\lim_{r\rightarrow+\infty}E(r)=E,\\ 
\\
 \displaystyle\lim_{R\rightarrow
+\infty}\dfrac{1}{\log R}\displaystyle \int_{0}^{R}\dfrac{E(r)}{r}dr=E.
\end{array}
\end{equation}
By integrating $\left( \ref{DHP40} \right) $ for $r\in(0,R)$ we get
\[
\int_{B_{R}}\left|  \dfrac{\partial u}{\partial\nu}\right| ^{2}\textcolor{black}{dx}+2\int_{0}%
^{R}\dfrac{E(r)}{r}dr=\int_{B_{R}}\left|  \dfrac{\partial u}{\partial\tau
}\right| ^{2}\textcolor{black}{dx}+E(R).
\]
By dividing previous equality by $\log R$, we obtain
\begin{equation}
\label{DHP42}\dfrac{1}{\log R}\int_{B_{R}}\left|  \dfrac{\partial u}%
{\partial\nu}\right| ^{2}\textcolor{black}{dx}+\dfrac{2}{\log R}\int_{0}^{R}\dfrac{E(r)}%
{r}dr=\dfrac{1}{\log R}\int_{B_{R}}\left|  \dfrac{\partial u}{\partial\tau
}\right| ^{2}\textcolor{black}{dx}+\dfrac{1}{\log R}E(R).
\end{equation}
We observe that for $r>R_{0}$
\begin{equation}
\label{DHP43}\left|  \dfrac{\partial u}{\partial\nu}\right| ^{2}=\left|
\dfrac{\partial\rho}{\partial\nu}\right| ^{2}+\rho^{2}\left|  \dfrac
{\partial\varphi}{\partial\nu}\right| ^{2}\leq\left| \nabla\rho\right|
^{2}+\left| \nabla\psi\right| ^{2}%
\end{equation}
and by (\ref{DHP10})
\begin{equation*}
\left|  \dfrac{\partial u}{\partial\tau}\right| ^{2}=\left|
\dfrac{\partial\rho}{\partial\tau}\right| ^{2}+\rho^{2}\left|  \dfrac
{\partial\varphi}{\partial\tau}\right| ^{2}=\left|
\dfrac{\partial\rho}{\partial\tau}\right| ^{2}+\rho^{2}\left(\dfrac{d}{r}+\dfrac
{\partial\psi}{\partial\tau}\right)^{2}.
\end{equation*}
Hence
\begin{equation}\label{DHP44}
\left|  \dfrac{\partial u}{\partial\tau}\right| ^{2}-\dfrac{d^{2}}{r^{2}}
= \left|  \dfrac{\partial\rho
}{\partial\tau}\right| ^{2}-(1-\rho^{2})\dfrac{d^{2}}{r^{2}}+\left|\dfrac{\partial\psi
}{\partial\tau}\right|^{2}-2(1-\rho^{2}) \dfrac{d}{r}\dfrac{\partial\psi
}{\partial\tau}+2\dfrac{d}{r}\dfrac{\partial\psi
}{\partial\tau}.
\end{equation}
By integrating on $B_{R}\setminus B_{R_{0}}$ and using (\ref{DHP18}) we easily get
\begin{equation}\label{DHP44bis}
\begin{array}{l}
\displaystyle\int_{B_{R}\setminus B_{R_{0}}}\left(\left|  \dfrac{\partial u}{\partial\tau}\right| ^{2}-\dfrac{d^{2}}{r^{2}}\right)\textcolor{black}{dx}
=\displaystyle\int_{B_{R}\setminus B_{R_{0}}} \left(\left|  \dfrac{\partial\rho
}{\partial\tau}\right| ^{2}+\left|\dfrac{\partial\psi
}{\partial\tau}\right|^{2}\right)\textcolor{black}{dx}-d^{2}\displaystyle\int_{B_{R}\setminus R_{0}}\dfrac{1-\rho^{2}}{r^{2}}\textcolor{black}{dx}\\
\\
-2d\displaystyle\int_{B_{R}\setminus R_{0}} \dfrac{1-\rho^{2}}{r}\dfrac{\partial\psi
}{\partial\tau}\textcolor{black}{dx}.
\end{array}
\end{equation}
By  Proposition \ref{mod1} and Cauchy-Schwarz inequality, (\ref{DHP44bis}) becomes
$$
\left|\displaystyle\int_{B_{R}\setminus B_{R_{0}}}\left(\left|  \dfrac{\partial u}{\partial\tau}\right| ^{2}-\dfrac{d^{2}}{r^{2}}\right)\textcolor{black}{dx}\right|\leq
$$
\begin{equation}\label{DHP45}
\leq \displaystyle\int_{B_{R}\setminus B_{R_{0}}}\left( \left| \nabla \rho\right| ^{2}+\left|\nabla \psi\right|^{2}\right)\textcolor{black}{dx}+d^{2}\displaystyle\int_{B_{R}\setminus R_{0}}\dfrac{1-\rho^{2}}{r^{2}}\textcolor{black}{dx}+2 d \sqrt{\log \dfrac{R}{R_{0}}}\left(\displaystyle\int_{B_{R}\setminus R_{0}} \left|\nabla \psi\right|^{2}\textcolor{black}{dx}\right)^{\frac{1}{2}}.
\end{equation}
Combining   $\left( \ref{DHP7*} \right) $,$\left( \ref{DHP7} \right) $, \eqref{IlogR},
 and $\left( \ref{DHP45} \right) $, we have
\[
\lim_{R\rightarrow+\infty}\left|\dfrac{1}{\log R}\int_{B_{R}\setminus B_{R_{0}}%
} \left(\left|  \dfrac{\partial u}{\partial\tau}\right| ^{2}-\dfrac{d^{2}%
}{r^{2}}\right)\textcolor{black}{dx}\right| =0,
\]
and so
\begin{equation}
\label{DHP46}\lim_{R\rightarrow+\infty}\dfrac{1}{\log R}\int_{B_{R}}\left|
\dfrac{\partial u}{\partial\tau}\right| ^{2}\textcolor{black}{dx}=2\pi d^{2}.
\end{equation}
Moreover, by $\left( \ref{DHP7*} \right) $, $\left( \ref{DHP7} \right) $ and $\left( \ref{DHP43} \right) $ it results
\begin{equation}
\label{DHP47}\lim_{R\rightarrow+\infty}\dfrac{1}{\log R}\int_{B_{R}\setminus
B_{R_{o}}}\left|  \dfrac{\partial u}{\partial\nu}\right| ^{2}\textcolor{black}{dx}=0.
\end{equation}
Finally, from $\left( \ref{DHP41} \right) $, $\left( \ref{DHP46} \right) $ and
$\left( \ref{DHP47} \right) $, by passing to the limit as $R\rightarrow
+\infty$ in $\left( \ref{DHP42} \right) $, by (\ref{pot1}) we obtain
\begin{equation}
E=\pi d^{2},
\end{equation}
which is $\left( \ref{DHP39} \right) $. \newline
Hence Theorem \ref{teo2}  is completely proved.

\end{document}